\documentclass[11pt]{article}

\usepackage[a4paper,margin=1in]{geometry}
\usepackage{amsmath,amssymb,amsthm,mathtools}
\usepackage{booktabs}
\usepackage{array,tabularx}
\usepackage{graphicx}
\usepackage{float}
\usepackage{microtype}
\usepackage{cite}
\usepackage[hidelinks]{hyperref}
\hypersetup{
  pdftitle={Making Surfaces Biharmonic by Conformal Reparametrization in Anti-de Sitter Three-Space},
  pdfauthor={Dipesh Bhandari},
  pdfsubject={Biharmonic conformal immersions into anti-de Sitter three-space},
  pdfkeywords={biharmonic map, conformal immersion, anti-de Sitter space, pseudo-Riemannian submanifold, rotational surface, f-biharmonic map}
}

\numberwithin{equation}{section}

\newtheorem{theorem}{Theorem}[section]
\newtheorem{proposition}[theorem]{Proposition}
\newtheorem{lemma}[theorem]{Lemma}
\newtheorem{corollary}[theorem]{Corollary}
\theoremstyle{definition}
\newtheorem{example}[theorem]{Example}
\theoremstyle{remark}
\newtheorem{remark}[theorem]{Remark}

\newcommand{\AdS}{\operatorname{AdS}}
\newcommand{\grad}{\operatorname{grad}}
\newcommand{\tr}{\operatorname{tr}}

\newcommand{\diver}{\operatorname{div}}
\newcolumntype{Y}{>{\raggedright\arraybackslash}X}

\title{\textbf{Making Surfaces Biharmonic by Conformal Reparametrization in Anti-de Sitter Three-Space}\\[0.35em]
\large Rigidity, Local Existence, and Parabolic Rotational Families}

\author{Dipesh Bhandari\\
\small Department of Physics, Southern Methodist University, Dallas, Texas 75275, USA\\
\small \texttt{dbhandari@smu.edu}}
\date{}
\begin{document}

\maketitle

\begin{abstract}
Harmonic immersions of surfaces are minimal, while biharmonic maps form a
fourth-order extension of harmonic-map theory.  Because every harmonic map
is automatically biharmonic, the basic existence problem is to find
\emph{proper} biharmonic maps, namely biharmonic maps that are not harmonic.
This paper asks a more geometric question: when can a fixed nondegenerate
surface in three-dimensional anti-de Sitter space be made proper biharmonic
by changing only the conformal metric on its domain?  Equivalently, how much
of biharmonicity is determined by the immersed surface, and how much can be
created by conformal reparametrization?

Writing the induced metric as $g=\lambda^2\bar g$ and introducing the
weighted mean curvature $u=\lambda^2H$, we first reduce the map equation to
a normal scalar equation coupled to a tangential first-order constraint.
The resulting system reveals a sharp rigidity--existence dichotomy.  A
nonminimal spacelike constant-mean-curvature solution must have constant
dilation and is locally the totally umbilical hyperbolic plane of curvature
$-2/L^2$.  Once the constant-mean-curvature assumption is removed, however,
there is an open set of local analytic solutions for which both $H$ and
$\lambda$ vary.  A moving-frame invariant then identifies the ambient
one-parameter symmetry and separates elliptic, hyperbolic, and index-three
parabolic orbit types.  In the parabolic class the geometric system reduces
to a scalar third-order analytic equation, from which we reconstruct
explicit local spacelike and real-principal timelike families in null
coordinates.  The paper therefore locates the rigid branch, proves that the
rigidity can be escaped, and gives an explicit mechanism for producing the
resulting non-CMC surfaces.
\end{abstract}

\noindent\textbf{Keywords.}
Biharmonic map; conformal immersion; anti-de Sitter space;
pseudo-Riemannian submanifold; rotational surface; $f$-biharmonic map.

\medskip
\noindent\textbf{2020 Mathematics Subject Classification.}
58E20, 53C42, 53C50, 53B30.

\section{Introduction}
\label{sec:introduction}

\subsection*{From harmonic maps to the geometric problem studied here}

Harmonic maps are the natural energy-critical maps between manifolds.  If
$\phi:(M,g)\to(N,h)$ is smooth, its Dirichlet energy measures the size of
$d\phi$, and its Euler--Lagrange equation is the vanishing of the tension
field $\tau(\phi)$.  This single framework contains geodesics, harmonic
functions, and many of the basic variational objects of differential
geometry.  For an isometric immersion of a surface, the tension field is
twice the mean-curvature vector.  Thus harmonicity is exactly minimality.
Harmonic-map theory also provides geometric models used in elasticity,
field theory, and the analysis of constrained media; see
\cite{Helein2002,Branding2024} for background and perspective.

Biharmonic maps are critical points of the bienergy
\[
    E_2(\phi)=\frac12\int_M |\tau(\phi)|^2\,dv_g
\]
and satisfy the fourth-order equation introduced by Jiang
\cite{Jiang1986}.  They are a higher-order analogue of harmonic maps, but
there is an immediate degeneracy: every harmonic map is biharmonic.  The
interesting solutions are therefore the \emph{proper biharmonic maps}, for
which $\tau(\phi)\neq0$ but the bitension field vanishes.  In submanifold
geometry this leads to a concrete question: which nonminimal immersed
submanifolds satisfy the fourth-order balance law?  Surveys of this theory
include \cite{MontaldoOniciuc2006,Ou2019}.

For surfaces there is a second degree of freedom that is easy to overlook.
An immersion determines its induced metric $g=\phi^*h$, but the same map may
also be viewed from a conformally related source metric
\[
    \bar g=\lambda^{-2}g,
    \qquad \lambda>0.
\]
Changing $\lambda$ does not change the image surface or its angles; it
changes the way the domain measures lengths and hence changes the tension
and bitension fields of the map.  This separates two geometric ingredients:
the extrinsic shape of the surface and the conformal parametrization used to
probe it.  In dimension two, the problem is equivalent to the
$f$-biharmonicity of the associated isometric immersion, with
$f=\lambda^2$ \cite{Ou2009,Ou2014,Ou2015,Ou2017}.

The present paper studies this separation in anti-de Sitter three-space,
the basic Lorentzian space form of constant negative curvature.  This is a
particularly revealing setting.  In Riemannian negative curvature, many
biharmonic theories are dominated by nonexistence and rigidity.  In
$\AdS_3$, the sign of the unit normal changes the shape-operator term in the
normal equation, so one should not expect the Riemannian conclusions to
carry over unchanged.  The spacelike and timelike cases also test how a
fourth-order variational equation interacts with causal geometry.  The
paper is geometric rather than model-specific, but it belongs to the same
higher-order variational framework that motivates biharmonic maps in
analysis, elasticity, and mathematical physics.

\medskip
\noindent\textbf{Guiding problem.}
Given a nondegenerate immersed surface
\[
    \phi:M^2\longrightarrow\AdS_3(L)
\]
with induced metric $g$, determine when there is a positive function
$\lambda$ such that
\[
    \phi:(M^2,\lambda^{-2}g)\longrightarrow\AdS_3(L)
\]
is proper biharmonic.  We organize this problem into three questions:

\begin{enumerate}
\item \emph{Rigidity:} if the surface has constant mean curvature, can the
      conformal dilation vary?
\item \emph{Existence beyond rigidity:} if constant mean curvature is
      dropped, do nonconstant dilations occur robustly, or only in isolated
      examples?
\item \emph{Extrinsic realization:} when symmetry reduces the equations to
      ordinary differential equations, which ambient one-parameter group
      actually generates the surface, and can the immersion be reconstructed
      explicitly?
\end{enumerate}

These questions give the paper its progression: first identify the rigid
branch, then show how to leave it, and finally recover the actual ambient
geometry of the new solutions.

\subsection*{The main answers and why they are of interest}

The key variable is not $H$ or $\lambda$ separately, but the weighted mean
curvature
\[
    u=\lambda^2H.
\]
It combines the extrinsic bending of the surface with the conformal weight
of the source metric.  In Section~\ref{sec:conformal-biharmonic-AdS} we show
that the fourth-order map equation is equivalent to
\[
    \Delta_gu+
    \left(-\varepsilon\operatorname{tr}_g(A^2)+2c\right)u=0,
    \qquad
    A(\grad_gu)+\varepsilon u\grad_gH=0.
\]
The first equation is a weighted normal balance; the second couples the
gradient of the weight to the principal geometry.  This formulation is the
working bridge between biharmonic-map theory and surface geometry.

The first answer is a rigidity theorem.  For a two-sided spacelike surface
with nonzero constant mean curvature, the tangential equation leaves no room
for a genuinely varying conformal factor.  We prove that $\lambda$ is
constant and that the surface is locally totally umbilical, with
\[
    A=\pm L^{-1}\operatorname{id},
    \qquad K=-2L^{-2}.
\]
Thus conformal reparametrization does not enlarge the nonminimal CMC branch:
it only produces constant homotheties of the known proper biharmonic
isometric surface.  This is useful not merely as a classification result; it
identifies exactly where a search for new examples must fail.

The second answer is that the failure is specific to CMC geometry.  Using a
warped principal-coordinate ansatz, we derive a closed analytic ODE system
whose initial data determine the metric, shape operator, weighted mean
curvature, and dilation.  An open set of initial data gives
$dH\neq0$ and $d\lambda\neq0$.  Hence the non-CMC solutions are not isolated
formal examples: they persist under small perturbations of the initial
profile.  This establishes a genuine local moduli phenomenon on the other
side of the CMC rigidity theorem.

The third answer explains what those local data look like in the ambient
space.  An ambient moving-frame calculation produces the conserved quantity
\[
    \mathfrak c=\eta\rho^2
    \left(L^{-2}-\sigma q^2-\varepsilon k_2^2\right)
\]
and a constant rank-two generator $\mathcal B\in\mathfrak{so}(2,2)$ satisfying
$\mathcal B^3=\mathfrak c\mathcal B$.  Its sign separates elliptic,
hyperbolic, and index-three parabolic orbit geometry.  On the parabolic
branch the full surface problem reduces to one scalar third-order analytic
ODE.  The scalar solution then determines the immersion by quadratures in
null coordinates.  In this way the reduction is not left at the level of
abstract first and second fundamental forms: the actual surface and its
ambient symmetry are recovered.

Table~\ref{tab:problem-answer} summarizes the logical structure.

\begin{table}[htbp]
\centering
\small
\begin{tabularx}{\textwidth}{@{}p{0.22\textwidth}Y Y@{}}
\toprule
Question & Answer proved here & Geometric meaning \\
\midrule
Can a nonzero-CMC spacelike surface acquire biharmonicity from a variable
conformal factor?
& No: the dilation is constant and the surface is locally totally
umbilical.
& The CMC branch is rigid; no new conformal freedom is created there. \\
Do variable-dilation solutions exist once CMC is removed?
& Yes: an open set of analytic cohomogeneity-one initial data gives
nonconstant $H$ and $\lambda$.
& The rigidity is not a general nonexistence theorem; it marks a boundary
between two regimes. \\
Can the reduced solutions be identified as actual rotational surfaces?
& Yes: a conserved generator classifies the orbit, and the parabolic
branches admit explicit null-coordinate reconstruction.
& The intrinsic ODE data are connected to concrete ambient geometry. \\
\bottomrule
\end{tabularx}
\caption{The problem--answer structure of the paper.}
\label{tab:problem-answer}
\end{table}

\subsection*{Relation to earlier work and precise scope}

The general pseudo-Riemannian biharmonic submanifold equations were derived
by Dong and Ou \cite{DongOu2017}.  The isometric theory in Lorentzian space
forms includes classifications of proper biharmonic spacelike surfaces and
hypersurfaces in anti-de Sitter space
\cite{Zhang2008,Sasahara2012,LiuDu2015}, classifications of
$\eta$-biharmonic Lorentzian surfaces \cite{Du2018}, and higher-order
polyharmonic hypersurfaces in pseudo-Riemannian space forms
\cite{BrandingEtAl2023}.  Lorentzian $f$-biharmonic submanifolds have also
been studied under assumptions such as parallel normalized mean curvature
and pseudo-umbilicity \cite{Du2021}.  Those hypotheses do not include the
non-CMC warped principal-coordinate families constructed here.

Recent Riemannian work treats biharmonic conformal immersions into
three-dimensional conformally flat spaces \cite{WangChen2026}, conformal
hypersurfaces in general Riemannian manifolds \cite{CherifOu2026}, and
$f$-biharmonic hypersurfaces in space forms and conformally flat spaces
\cite{WangQin2023,WangQinChen2024}.  Rotational hypersurfaces and their
elliptic, hyperbolic, and parabolic orbit types are classical
\cite{doCarmoDajczer1983}.  Related rotational families in Lorentzian space
forms occur in the classification of biconservative surfaces
\cite{Fu2015}; that problem imposes only the tangential bitension equation,
whereas the present problem imposes both the weighted normal and tangential
equations and reconstructs the conformal dilation.  The rotational orbit
geometry itself is therefore not claimed as new.

A separate literature uses ``conformal biharmonic'' for critical points of a
curvature-corrected conformal bienergy \cite{BrandingNistorOniciuc}.  That is
not the variational problem considered here.  Throughout this paper,
``biharmonic conformal immersion'' means a conformal immersion that is
biharmonic as a map with respect to its stated source metric, and ``proper
biharmonic'' means biharmonic but nonharmonic, not proper as a topological
map.

The signed surface equation in Section~\ref{sec:conformal-biharmonic-AdS} is
included to make the conventions and reduction self-contained; it is not by
itself the main novelty.  The new content is the CMC rigidity theorem, the
open non-CMC local existence theory, the conserved ambient-generator
classification, and the explicit parabolic reconstructions.  All existence
statements are local.  We do not claim a global classification, completeness
of the new surfaces, periodic elliptic or hyperbolic profiles, or properness
of the immersion as a map.

\begin{table}[htbp]
\centering
\small
\begin{tabularx}{\textwidth}{@{}p{0.20\textwidth}Y Y Y@{}}
\toprule
Theory & Typical hypothesis & Equation imposed & Relation to this paper \\
\midrule
Isometric biharmonic Lorentzian surfaces
\cite{Sasahara2012,LiuDu2015}
& Fixed induced domain metric, often CMC or restricted principal-curvature
type
& Full unweighted bitension equation
& The constant-dilation subcase of the present problem. \\
$\eta$-biharmonic surfaces \cite{Du2018}
& Linear modification of the bitension field
& Modified fourth-order equation
& A different variational equation. \\
Lorentzian $f$-biharmonic submanifolds \cite{Du2021}
& Parallel normalized mean curvature, pseudo-umbilicity, or related
restrictions
& Weighted biharmonic equation
& Does not cover the non-CMC warped families constructed below. \\
Biconservative rotational surfaces \cite{Fu2015}
& Vanishing tangential bitension component
& Tangential equation only
& Shares orbit geometry, but not the weighted normal equation or recovered
dilation. \\
Present work
& Local spacelike data and one specified real-principal timelike branch
& Both weighted normal and tangential equations
& Produces local families with both $H$ and $\lambda$ nonconstant. \\
\bottomrule
\end{tabularx}
\caption{Position of the present problem relative to adjacent surface
theories.}
\label{tab:scope-comparison}
\end{table}

\subsection*{Roadmap}

Section~\ref{sec:conformal-biharmonic-AdS} translates the biharmonic map
equation into the two surface equations used throughout the paper.
Section~3 applies the tangential equation to the CMC branch and proves its
rigidity.  Section~\ref{sec:cohomogeneity-one-existence} then removes the CMC
assumption and constructs an open local family from analytic initial data.
Section~\ref{sec:analytical-rotational-reduction} identifies the ambient
symmetry generator, classifies its orbit type, and develops the spacelike and
timelike parabolic reconstructions.  The concluding section returns to the
guiding problem, separates what has been settled locally from what remains
global, and states the next natural questions.

\section{From the biharmonic map equation to a surface system}
\label{sec:conformal-biharmonic-AdS}

The guiding problem is stated in terms of a fourth-order map equation, but
the later rigidity and existence arguments require quantities that belong
directly to the immersed surface.  The purpose of this section is therefore
to translate biharmonicity into a coupled system for the induced metric, the
shape operator, the mean curvature, and the conformal dilation.  Keeping the
normal sign and pseudo-Riemannian traces explicit lets the same derivation
cover both spacelike and timelike surfaces.

The reader should keep the roles of the two metrics distinct.  The induced
metric $g=\phi^*h$ records the geometry of the image surface, while
$\bar g=\lambda^{-2}g$ is the metric with respect to which the map is asked
to be biharmonic.  The weighted variable $u=\lambda^2H$ is where those two
geometries meet.

\subsection{The two metrics and the causal sign conventions}

Let $(N^3,h)$ be a Lorentzian space form of constant sectional curvature $c$,
with curvature tensor
\begin{equation}
R^N(X,Y)Z
=
 c\bigl(h(Y,Z)X-h(X,Z)Y\bigr).
\label{eq:ambient-curvature-convention}
\end{equation}
For anti-de Sitter three-space of radius $L>0$,
\begin{equation}
N^3=\AdS_3(L),
\qquad
c=-\frac{1}{L^2}.
\label{eq:ads-curvature}
\end{equation}

Let
\begin{equation}
\phi:(M^2,\bar g)\longrightarrow (N^3,h)
\label{eq:conformal-immersion}
\end{equation}
be a nondegenerate conformal immersion, and write
\begin{equation}
g=\phi^*h=\lambda^2\bar g,
\qquad
\bar g=\lambda^{-2}g,
\qquad
\lambda>0.
\label{eq:metric-relation}
\end{equation}
Thus $g$ is the induced metric.  The immersion is spacelike when $g$ is
Riemannian and timelike when $g$ is Lorentzian.

Choose a local unit normal field $\xi$ and define
\begin{equation}
\varepsilon:=h(\xi,\xi)
=
\begin{cases}
-1, & \text{for a spacelike surface},\\
+1, & \text{for a timelike surface}.
\end{cases}
\label{eq:epsilon-definition}
\end{equation}
Our Weingarten convention is
\begin{equation}
\nabla^N_X\xi=-AX,
\label{eq:weingarten}
\end{equation}
so the Gauss formula is
\begin{equation}
\nabla^N_XY
=
\nabla_XY+\varepsilon g(AX,Y)\xi.
\label{eq:gauss-formula}
\end{equation}

We use the standard semi-Riemannian submanifold conventions of
Ref.~\cite{ONeill1983}, subject to the explicit curvature and
mean-curvature sign choices stated below.

We define the scalar mean curvature $H$ by requiring the mean-curvature vector
to be
\begin{equation}
\mathbf H=H\xi.
\label{eq:mean-curvature-vector}
\end{equation}
Consequently,
\begin{equation}
H=\frac{\varepsilon}{2}\tr_g A,
\qquad
\tr_g A=2\varepsilon H.
\label{eq:H-trA-relation}
\end{equation}
In particular,
\begin{equation}
H=-\frac12\tr_gA
\quad\text{in the spacelike case},
\qquad
H=\frac12\tr_gA
\quad\text{in the timelike case}.
\label{eq:H-special-cases}
\end{equation}
This convention is convenient because the tension field of the induced
isometric immersion is simply
\begin{equation}
\tau_g(\phi)=2H\xi.
\label{eq:tension-isometric}
\end{equation}

Let $\{e_1,e_2\}$ be a local pseudo-orthonormal frame for $g$, with
\begin{equation}
g(e_i,e_j)=\epsilon_i\delta_{ij},
\qquad
\epsilon_i\in\{+1,-1\}.
\end{equation}
For a function $q$ and a section $V$ of $\phi^{-1}TN$, we use
\begin{align}
\Delta_g q
&=
\sum_{i=1}^2\epsilon_i
\left(e_i(e_iq)-(\nabla_{e_i}e_i)q\right),
\label{eq:scalar-laplacian}\\
\Delta_g^\phi V
&=
\sum_{i=1}^2\epsilon_i
\left(
\nabla^\phi_{e_i}\nabla^\phi_{e_i}V
-
\nabla^\phi_{\nabla_{e_i}e_i}V
\right).
\label{eq:rough-laplacian}
\end{align}
Thus $\Delta_g$ is the Laplace--Beltrami operator in the spacelike case and
the wave operator in the timelike case.  We also set
\begin{equation}
|A|_g^2:=\tr_g(A^2)
=
\sum_{i=1}^2\epsilon_i g(Ae_i,Ae_i).
\label{eq:A-square}
\end{equation}
For a timelike surface this quantity need not be positive.

The bitension field is defined by
\begin{equation}
\tau_{2,g}(\phi)
=
\Delta_g^\phi\tau_g(\phi)
-
\sum_{i=1}^2\epsilon_i
R^N\bigl(d\phi(e_i),\tau_g(\phi)\bigr)d\phi(e_i).
\label{eq:bitension-definition}
\end{equation}
For timelike domains, ``biharmonic'' is understood in the standard
pseudo-Riemannian sense: the field in \eqref{eq:bitension-definition} vanishes.
The pseudo-Riemannian trace convention agrees with the one used in
\cite{DongOu2017}.

\subsection{Why conformal reparametrization becomes an \texorpdfstring{$f$}{f}-biharmonic problem}

Set
\begin{equation}
f:=\lambda^2,
\qquad
\bar g=f^{-1}g.
\label{eq:f-lambda}
\end{equation}
The following two-dimensional conformal-change identity is the
pseudo-Riemannian counterpart of the standard Riemannian formula in
\cite{Ou2009}.

\begin{lemma}[Two-dimensional conformal reduction]
\label{lem:conformal-reduction}
Let $\phi:(M^2,g)\to(N,h)$ be a map between pseudo-Riemannian manifolds, and
let $\bar g=f^{-1}g$ for a positive function $f$.  Then
\begin{equation}
\tau_{\bar g}(\phi)=f\tau_g(\phi),
\label{eq:tension-conformal-transform}
\end{equation}
and
\begin{equation}
\tau_{2,\bar g}(\phi)
=
f\left[
 f\tau_{2,g}(\phi)
 +(\Delta_g f)\tau_g(\phi)
 +2\nabla^\phi_{\grad_g f}\tau_g(\phi)
\right].
\label{eq:bitension-conformal-transform}
\end{equation}
Consequently, $\phi:(M^2,f^{-1}g)\to(N,h)$ is biharmonic if and only if
\begin{equation}
f\tau_{2,g}(\phi)
 +(\Delta_g f)\tau_g(\phi)
 +2\nabla^\phi_{\grad_g f}\tau_g(\phi)=0.
\label{eq:f-biharmonic-map-equation}
\end{equation}
\end{lemma}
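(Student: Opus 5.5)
We compute directly in a local pseudo-orthonormal frame. If $\{e_i\}$ is pseudo-orthonormal for $g$ with $g(e_i,e_j)=\epsilon_i\delta_{ij}$, then $\bar e_i:=\sqrt f\,e_i$ is pseudo-orthonormal for $\bar g=f^{-1}g$ with the same signs $\epsilon_i$. Since $f>0$, no causal character changes. Writing $\tau_g(\phi)=\tr_g\nabla d\phi$, we use the two-dimensional conformal-change formula for the Levi-Civita connection,
\[
\bar\nabla_XY=\nabla_XY-\tfrac{1}{2f}\bigl(X(f)Y+Y(f)X-g(X,Y)\grad_gf\bigr).
\]
It follows from Koszul's formula and holds verbatim in any signature. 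Taking the $\bar g$-trace of $\bar\nabla d\phi$ gives $\tau_{\bar g}=f\bigl(\tau_g+\tfrac{2-m}{2f}d\phi(\grad_g f)\bigr)$, where $m=\dim M$. With $m=2$ the correction vanishes, which proves \eqref{eq:tension-conformal-transform}.

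For the bitension field we work from \eqref{eq:bitension-definition} with $\bar g$ in place of $g$. The curvature term is straightforward: $\sum_i\epsilon_iR^N(d\phi(\bar e_i),\tau_{\bar g})d\phi(\bar e_i)=f\cdot f\sum_i\epsilon_iR^N(d\phi(e_i),\tau_g)d\phi(e_i)$, using tensoriality and $\tau_{\bar g}=f\tau_g$.

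The main work is the rough Laplacian. We need the conformal transformation of $\Delta^\phi$ acting on sections of $\phi^{-1}TN$. Using the connection formula above with the trace taken in $\bar g$, one finds for any section $V$
\[
\Delta^\phi_{\bar g}V=f\Bigl(\Delta^\phi_gV+\tfrac{m-2}{2f}\nabla^\phi_{\grad_gf}V\Bigr),
\]
and in dimension two this is simply $\Delta^\phi_{\bar g}V=f\Delta^\phi_gV$. We then apply it to $V=f\tau_g$ and expand with the Leibniz rule:
\[
\Delta^\phi_g(fW)=(\Delta_gf)W+2\nabla^\phi_{\grad_gf}W+f\Delta^\phi_gW.
\]
Here the middle term arises from $2\sum_i\epsilon_i e_i(f)\nabla^\phi_{e_i}W=2\nabla^\phi_{\grad_gf}W$, which uses $\grad_gf=\sum_i\epsilon_ie_i(f)e_i$. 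This is the step where signature signs must be tracked, and we will verify that each $\epsilon_i$ appears exactly once so that both $\Delta_g$ and $\grad_g$ come out with the conventions of \eqref{eq:scalar-laplacian}.

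Combining the two pieces gives $\tau_{2,\bar g}=f\bigl[(\Delta_gf)\tau_g+2\nabla^\phi_{\grad_gf}\tau_g+f\Delta^\phi_g\tau_g-f\sum_i\epsilon_iR^N(\cdot)\bigr]$. The last two terms combine to $f\tau_{2,g}$, which is exactly \eqref{eq:bitension-conformal-transform}. Since $f>0$, dividing by $f$ gives the equivalence with \eqref{eq:f-biharmonic-map-equation}.

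We expect the main obstacle to be the Laplacian transformation, specifically checking that the connection-correction terms cancel in dimension two. We will do this by computing $\sum_i\epsilon_i\bar\nabla_{\bar e_i}\bar e_i=f\sum_i\epsilon_i\nabla_{e_i}e_i+\tfrac{f}{2}(2-m)\grad_g f\cdot f^{-1}$ up to the terms coming from differentiating $\sqrt f$. These combine with $\bar e_i(\sqrt f)$ to give a multiple of $(m-2)$.
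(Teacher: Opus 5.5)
Your proposal is correct and follows essentially the paper's route. You use the pseudo-orthonormal frame $\sqrt f\,e_i$, the cancellation of the $(m-2)$ connection-correction terms in both the tension field and the rough Laplacian, the extra factor $f$ from the curvature trace, and the Leibniz rule for $\Delta^\phi_g(fW)$; the paper only bundles the Laplacian and curvature terms into one operator $\mathcal L^\phi_g$ with $\mathcal L^\phi_{\bar g}=f\mathcal L^\phi_g$.

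One sign slip, harmless at $m=2$: in general dimension the rough-Laplacian correction is $\frac{2-m}{2f}\nabla^\phi_{\grad_g f}V$, not $\frac{m-2}{2f}$. It should carry the same sign as in your tension formula; for $m=1$ one gets $\Delta_{\bar g}u=fu''+\tfrac12 f'u'$. The same sign flip recurs in your last paragraph.
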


\begin{proof}
For a conformal change $\bar g=f^{-1}g$ in dimension two, the trace term
arising from the difference of the Levi--Civita connections cancels, giving
\eqref{eq:tension-conformal-transform}.  Introduce the linear operator
\begin{equation}
\mathcal L_g^\phi(V)
:=
\Delta_g^\phi V
-
\sum_{i=1}^2\epsilon_i
R^N\bigl(d\phi(e_i),V\bigr)d\phi(e_i),
\label{eq:L-operator}
\end{equation}
so that $\tau_{2,g}(\phi)=\mathcal L_g^\phi(\tau_g(\phi))$.
If $\{e_i\}$ is pseudo-orthonormal for $g$, then
$\{\sqrt f\,e_i\}$ is pseudo-orthonormal for $\bar g$.  The standard
connection-difference formula shows that the first-order trace term in the
rough Laplacian is proportional to $m-2$ and hence vanishes for $m=2$.
The curvature trace scales by the same factor $f$.  Consequently,
\begin{equation}
\mathcal L_{\bar g}^\phi(V)=f\mathcal L_g^\phi(V).
\label{eq:L-conformal-transform}
\end{equation}
Using the product rule
\begin{equation}
\mathcal L_g^\phi(fV)
=
f\mathcal L_g^\phi(V)
 +(\Delta_g f)V
 +2\nabla^\phi_{\grad_g f}V,
\label{eq:L-product-rule}
\end{equation}
we obtain
\begin{align}
\tau_{2,\bar g}(\phi)
&=
\mathcal L_{\bar g}^\phi\bigl(\tau_{\bar g}(\phi)\bigr)\\
&=
f\mathcal L_g^\phi\bigl(f\tau_g(\phi)\bigr)\\
&=
f\left[
 f\tau_{2,g}(\phi)
 +(\Delta_g f)\tau_g(\phi)
 +2\nabla^\phi_{\grad_g f}\tau_g(\phi)
\right],
\end{align}
which proves the claim.  The computation uses only the signed trace and
therefore applies equally to spacelike and timelike metrics.
\end{proof}

\subsection{The normal and tangential surface equations}

\begin{lemma}[Bitension operator on the mean-curvature normal]
\label{lem:normal-bitension}
For the induced isometric immersion $\phi:(M^2,g)\to N^3(c)$,
\begin{align}
\mathcal L_g^\phi(H\xi)
={}&
\left(
\Delta_gH
-\varepsilon |A|_g^2H
+2cH
\right)\xi
\nonumber\\
&
-2A(\grad_gH)
-2\varepsilon H\grad_gH.
\label{eq:L-Hxi}
\end{align}
\end{lemma}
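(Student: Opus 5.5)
The plan is a direct frame computation of the two pieces of $\mathcal L_g^\phi(H\xi)$ defined in \eqref{eq:L-operator}: the rough Laplacian $\Delta_g^\phi(H\xi)$ from \eqref{eq:rough-laplacian}, and the curvature trace. Throughout I use the Weingarten formula \eqref{eq:weingarten} and the Gauss formula \eqref{eq:gauss-formula}. I also use the Codazzi equation, which holds in the form $(\nabla_XA)Y=(\nabla_YA)X$ because the ambient space $N^3(c)$ has constant curvature, so $R^N(X,Y)\xi=0$ for tangent $X,Y$ by \eqref{eq:ambient-curvature-convention}. To simplify the computation I work at a point with a local pseudo-orthonormal frame $\{e_i\}$.

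The first derivative is immediate:
\[
\nabla^\phi_X(H\xi)=X(H)\,\xi-H\,AX.
\]
Differentiating once more and splitting $\nabla^N_X(AX)$ by the Gauss formula gives
\[
\nabla^\phi_X\nabla^\phi_X(H\xi)=X(X H)\,\xi-2X(H)\,AX-H\,\nabla_X(AX)-\varepsilon H\,g(AX,AX)\,\xi .
\]
Next I subtract $\nabla^\phi_{\nabla_XX}(H\xi)=(\nabla_XX)(H)\,\xi-H\,A(\nabla_XX)$ and sum with the signs $\epsilon_i$ over $X=e_i$. The terms combine as follows.
\begin{itemize}
\item The scalar terms give $(\Delta_gH)\,\xi$, by \eqref{eq:scalar-laplacian}.
\item The terms $-2\sum\epsilon_i e_i(H)Ae_i$ give $-2A(\grad_gH)$, since $\grad_gH=\sum\epsilon_i e_i(H)e_i$.
\item The quadratic terms give $-\varepsilon H|A|_g^2\,\xi$, by \eqref{eq:A-square}.
\item The remaining tangential terms give $-H\sum_i\epsilon_i(\nabla_{e_i}A)e_i$.
\end{itemize}

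The one step needing care is this last divergence term. Using Codazzi together with the symmetry of $A$ (so that each $\nabla_ZA$ is also self-adjoint), one gets for any tangent $Z$
\[
g\Bigl(\sum_i\epsilon_i(\nabla_{e_i}A)e_i,Z\Bigr)=\sum_i\epsilon_i\, g\bigl((\nabla_ZA)e_i,e_i\bigr)=Z(\tr_gA).
\]
Hence $\sum_i\epsilon_i(\nabla_{e_i}A)e_i=\grad_g(\tr_gA)=2\varepsilon\,\grad_gH$ by \eqref{eq:H-trA-relation}. This produces the term $-2\varepsilon H\grad_gH$. I expect this sign and trace bookkeeping, valid uniformly for both signatures, to be the main point to check.

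Finally, the curvature trace follows from \eqref{eq:ambient-curvature-convention}, using $h(\xi,e_i)=0$ and $\sum_i\epsilon_i h(e_i,e_i)=2$:
\[
-\sum_i\epsilon_iR^N(e_i,H\xi)e_i=-\sum_i\epsilon_i\,c\bigl(h(H\xi,e_i)e_i-h(e_i,e_i)H\xi\bigr)=2cH\,\xi .
\]
Adding this to the rough Laplacian yields \eqref{eq:L-Hxi}.
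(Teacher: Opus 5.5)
Your proposal is correct and follows the paper's proof: the same first and second covariant derivatives via Weingarten and Gauss, the same Codazzi reduction of the divergence term to $-H\diver_gA=-2\varepsilon H\grad_gH$, and the same curvature trace $2cH\xi$. The only differences are cosmetic: you keep the $\nabla_{e_i}e_i$ terms instead of choosing a frame with $\nabla_{e_i}e_j=0$ at the point, and you explicitly verify $\diver_gA=\grad_g(\tr_gA)$, which the paper only asserts.
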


\begin{proof}
Fix a point $p\in M$ and choose a local pseudo-orthonormal frame satisfying
$\nabla_{e_i}e_j=0$ at $p$.  From the Weingarten formula,
\begin{equation}
\nabla^\phi_{e_i}(H\xi)
=e_i(H)\xi-HAe_i.
\label{eq:first-Hxi-derivative}
\end{equation}
Differentiating once more, tracing with the signs $\epsilon_i$, and using the
Gauss formula gives
\begin{equation}
\Delta_g^\phi(H\xi)
=
\left(\Delta_gH-\varepsilon H|A|_g^2\right)\xi
-2A(\grad_gH)-H\diver_gA,
\label{eq:rough-Hxi}
\end{equation}
where
\begin{equation}
\diver_gA
:=
\sum_{i=1}^2\epsilon_i(\nabla_{e_i}A)e_i.
\end{equation}
Since the ambient space has constant sectional curvature, the Codazzi equation
implies
\begin{equation}
\diver_gA
=
\grad_g(\tr_gA)
=
2\varepsilon\grad_gH.
\label{eq:divA}
\end{equation}
Moreover, by \eqref{eq:ambient-curvature-convention},
\begin{equation}
\sum_{i=1}^2\epsilon_i
R^N(e_i,H\xi)e_i
=-2cH\xi.
\label{eq:curvature-trace}
\end{equation}
Substitution of \eqref{eq:divA} and \eqref{eq:curvature-trace} into
\eqref{eq:L-operator} yields \eqref{eq:L-Hxi}.
\end{proof}

\begin{theorem}[Surface form of the biharmonic conformal equation]
\label{thm:unified-conformal-biharmonic}
Let
\begin{equation}
\phi:(M^2,\bar g)\longrightarrow N^3(c)
\end{equation}
be a spacelike or timelike conformal immersion, with
$g=\phi^*h=\lambda^2\bar g$.  Let $\xi$, $\varepsilon$, $A$, and $H$ be as in
\eqref{eq:epsilon-definition}--\eqref{eq:H-trA-relation}, and define
\begin{equation}
u:=\lambda^2H.
\label{eq:u-definition}
\end{equation}
Then $\phi:(M^2,\bar g)\to N^3(c)$ is biharmonic if and only if
\begin{equation}
\boxed{
\Delta_gu+
\left(-\varepsilon|A|_g^2+2c\right)u=0
}
\label{eq:unified-normal}
\end{equation}
and
\begin{equation}
\boxed{
A(\grad_gu)+\varepsilon u\grad_gH=0.
}
\label{eq:unified-tangential}
\end{equation}
\end{theorem}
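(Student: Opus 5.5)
We will combine Lemma~\ref{lem:conformal-reduction} with Lemma~\ref{lem:normal-bitension}, and then split the resulting vector equation into its normal and tangential components. By Lemma~\ref{lem:conformal-reduction} with $f=\lambda^2$, biharmonicity of $\phi:(M^2,\bar g)\to N^3(c)$ is equivalent to \eqref{eq:f-biharmonic-map-equation}. The key observation is that, via the product rule \eqref{eq:L-product-rule}, the left-hand side of \eqref{eq:f-biharmonic-map-equation} is exactly $\mathcal L_g^\phi(f\tau_g(\phi))$. Since $\tau_g(\phi)=2H\xi$ by \eqref{eq:tension-isometric}, the equation becomes
\begin{equation*}
\mathcal L_g^\phi(u\,\xi)=0,\qquad u=\lambda^2H.
\end{equation*}
This absorbs all the $\Delta_g f$ and $\grad_g f$ terms into a single operator evaluation.

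Next we apply the computation of Lemma~\ref{lem:normal-bitension} with $H$ replaced by the arbitrary function $u$. Care is needed here. The proof of \eqref{eq:L-Hxi} used two kinds of input: the rough Laplacian formula \eqref{eq:rough-Hxi}, which holds for any scalar multiple $q\xi$, and the Codazzi identity \eqref{eq:divA}, which involves $H$ itself because $\diver_gA=2\varepsilon\grad_gH$. Repeating the derivation for $q=u$, we therefore expect
\begin{equation*}
\Delta_g^\phi(u\xi)=\bigl(\Delta_gu-\varepsilon u|A|_g^2\bigr)\xi-2A(\grad_gu)-u\,\diver_gA,
\end{equation*}
with $\diver_gA=2\varepsilon\grad_gH$. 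The curvature trace \eqref{eq:curvature-trace} gives $-2cu\xi$. Altogether,
\begin{equation*}
\mathcal L_g^\phi(u\xi)=\bigl(\Delta_gu-\varepsilon|A|_g^2u+2cu\bigr)\xi-2A(\grad_gu)-2\varepsilon u\grad_gH.
\end{equation*}
The main point to verify is that the derivation of \eqref{eq:rough-Hxi} in a frame with $\nabla_{e_i}e_j=0$ at $p$ goes through verbatim with $H$ replaced by $u$. The $u\,\diver_gA$ term comes from differentiating $-uAe_i$. The cross terms $2\epsilon_ie_i(u)(-Ae_i)$ sum to $-2A(\grad_gu)$ by the symmetry of $A$ with respect to $g$. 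The $\xi$-component of $-u\nabla^N_{e_i}(Ae_i)$ yields $-\varepsilon u|A|_g^2$ through the Gauss formula \eqref{eq:gauss-formula}.

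Finally, since $\xi$ is nondegenerate, $T_pN$ splits as $d\phi(T_pM)\oplus\mathbb R\xi$. The vector equation therefore holds exactly when the normal component vanishes, which is \eqref{eq:unified-normal}, and the tangential component vanishes, which after division by $-2$ is \eqref{eq:unified-tangential}. The only step requiring real attention is the bookkeeping of signs in the rough Laplacian: the pseudo-orthonormal signs $\epsilon_i$ and the factor $\varepsilon$ from the Gauss formula. Everything else is a direct substitution into the two earlier lemmas.
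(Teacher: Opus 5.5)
Your argument is correct and is essentially the paper's proof in a different order. You first use the product rule \eqref{eq:L-product-rule} to collapse \eqref{eq:f-biharmonic-map-equation} to $\mathcal L_g^\phi(u\xi)=0$, then redo Lemma~\ref{lem:normal-bitension} with coefficient $u$, correctly keeping $\diver_gA=2\varepsilon\grad_gH$. The paper instead applies the lemma to $H\xi$ as stated and recombines $f\Delta_gH+H\Delta_gf+2g(\grad_gf,\grad_gH)=\Delta_g(fH)$ and $fA(\grad_gH)+HA(\grad_gf)=A(\grad_g(fH))$ by hand.

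Your ordering shows directly why the tangential equation is asymmetric: $\grad_gu$ comes from the cross terms and $\grad_gH$ from Codazzi. One small correction: those cross terms sum to $-2A(\grad_gu)$ by linearity of $A$ alone, so symmetry of $A$ is not needed there.
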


\begin{proof}
By \eqref{eq:tension-isometric},
\begin{equation}
\tau_g(\phi)=2H\xi.
\end{equation}
Using Lemma~\ref{lem:normal-bitension}, equation
\eqref{eq:f-biharmonic-map-equation}, divided by $2$, becomes
\begin{equation}
f\mathcal L_g^\phi(H\xi)
+H(\Delta_gf)\xi
+2\nabla^\phi_{\grad_gf}(H\xi)=0.
\label{eq:pre-split}
\end{equation}
The last term satisfies
\begin{equation}
\nabla^\phi_{\grad_gf}(H\xi)
=
g(\grad_gf,\grad_gH)\xi
-HA(\grad_gf).
\label{eq:weighted-normal-derivative}
\end{equation}
The normal part of \eqref{eq:pre-split} is therefore
\begin{align}
0={}&
f\left(
\Delta_gH-\varepsilon|A|_g^2H+2cH
\right)
+H\Delta_gf
+2g(\grad_gf,\grad_gH)\\
={}&
\Delta_g(fH)
+
\left(-\varepsilon|A|_g^2+2c\right)fH.
\end{align}
Since $u=fH$, this is \eqref{eq:unified-normal}.

The tangential part is
\begin{align}
0
&=
-2fA(\grad_gH)
-2\varepsilon fH\grad_gH
-2HA(\grad_gf)\\
&=
-2\left[
A\bigl(\grad_g(fH)\bigr)
+\varepsilon fH\grad_gH
\right],
\end{align}
which is equivalent to \eqref{eq:unified-tangential}.
\end{proof}

\begin{corollary}[Spacelike surfaces in $\AdS_3(L)$]
\label{cor:spacelike-system}
Let $\phi:(M^2,\lambda^{-2}g)\to\AdS_3(L)$ be a spacelike conformal
immersion.  Then $g$ is Riemannian, $\varepsilon=-1$, and
$H=-\frac12\tr_gA$.  The immersion is biharmonic if and only if, for
$u=\lambda^2H$,
\begin{equation}
\boxed{
\Delta_gu+
\left(
|A|_g^2-\frac{2}{L^2}
\right)u=0
}
\label{eq:spacelike-normal}
\end{equation}
and
\begin{equation}
\boxed{
A(\grad_gu)-u\grad_gH=0.
}
\label{eq:spacelike-tangential}
\end{equation}
\end{corollary}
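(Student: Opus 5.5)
This is a direct specialization of Theorem~\ref{thm:unified-conformal-biharmonic}, so I would simply substitute the spacelike data into the two boxed equations \eqref{eq:unified-normal} and \eqref{eq:unified-tangential}.

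First I would record the causal data. Since $\phi$ is spacelike, the induced metric $g=\phi^*h$ is Riemannian. The normal line is then the $h$-orthogonal complement of a spacelike plane in a Lorentzian $3$-space, so it is timelike. Hence $\varepsilon=h(\xi,\xi)=-1$ by \eqref{eq:epsilon-definition}, and \eqref{eq:H-special-cases} gives $H=-\frac12\tr_gA$. The ambient curvature is $c=-1/L^2$ by \eqref{eq:ads-curvature}, which also fixes the convention \eqref{eq:ambient-curvature-convention} used in Lemma~\ref{lem:normal-bitension}.

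Next I would substitute into the normal equation. With $\varepsilon=-1$ and $c=-L^{-2}$, the coefficient becomes
\[
-\varepsilon|A|_g^2+2c=|A|_g^2-2L^{-2},
\]
so \eqref{eq:unified-normal} becomes \eqref{eq:spacelike-normal}. Here $|A|_g^2=\tr_g(A^2)$ is the genuine nonnegative squared norm, because $g$ is Riemannian and $A$ is $g$-self-adjoint.

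Then I would substitute into the tangential equation. The term $\varepsilon u\grad_gH$ becomes $-u\grad_gH$, which gives \eqref{eq:spacelike-tangential}.

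Finally, the equivalence with biharmonicity is inherited verbatim from the theorem. That theorem in turn rests on Lemma~\ref{lem:conformal-reduction} with $f=\lambda^2$, which needs only $\lambda>0$. The statement involves only a local unit normal, and replacing $\xi$ by $-\xi$ flips $A$ and $H$ while leaving $\varepsilon$ unchanged. Both boxed equations are invariant under this flip: the normal equation is linear in $u$, and the tangential equation is homogeneous of degree two in $(A,H)$. So the choice of normal is harmless.

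There is no real obstacle here. The only point needing care is the sign bookkeeping: confirming that $\varepsilon=-1$ for a spacelike surface in a Lorentzian ambient space, and that the paper's sign convention $c=-1/L^2$ matches the curvature convention \eqref{eq:ambient-curvature-convention}.
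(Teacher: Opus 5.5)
Your proposal is correct and is exactly how the paper obtains this corollary: it substitutes $\varepsilon=-1$ and $c=-L^{-2}$ into the two boxed equations of Theorem~\ref{thm:unified-conformal-biharmonic}. Your extra remarks on the timelike normal and on orientation invariance are accurate and match the paper's own orientation-invariance remark.
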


\begin{corollary}[Timelike surfaces in $\AdS_3(L)$]
\label{cor:timelike-system}
Let $\phi:(M^2,\lambda^{-2}g)\to\AdS_3(L)$ be a timelike conformal
immersion.  Then $g$ is Lorentzian, $\varepsilon=+1$, and
$H=\frac12\tr_gA$.  Writing $\Box_g=\Delta_g$ for the wave operator, the
immersion is biharmonic if and only if, for $u=\lambda^2H$,
\begin{equation}
\boxed{
\Box_gu-
\left(
|A|_g^2+\frac{2}{L^2}
\right)u=0
}
\label{eq:timelike-normal}
\end{equation}
and
\begin{equation}
\boxed{
A(\grad_gu)+u\grad_gH=0.
}
\label{eq:timelike-tangential}
\end{equation}
Here both $\Box_g$ and $|A|_g^2=\tr_g(A^2)$ are indefinite objects.
\end{corollary}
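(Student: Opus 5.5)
This follows by specializing Theorem~\ref{thm:unified-conformal-biharmonic} to $N^3=\AdS_3(L)$ with $\varepsilon=+1$. We take $c=-L^{-2}$ from \eqref{eq:ads-curvature} and a timelike unit normal is not available. Since the surface is timelike, $g$ is Lorentzian, so its normal must be spacelike in the Lorentzian ambient space. This gives $\varepsilon=h(\xi,\xi)=+1$, as recorded in \eqref{eq:epsilon-definition}.

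Concretely, the steps are as follows. First, observe that $\phi^*h=g$ has index one. The ambient metric $h$ has index one, so the normal line has index zero and $\varepsilon=+1$. Second, read off $H=\tfrac12\tr_gA$ from \eqref{eq:H-special-cases}. Third, substitute $\varepsilon=1$ and $c=-1/L^2$ into \eqref{eq:unified-normal}. The coefficient $-\varepsilon|A|_g^2+2c$ becomes $-(|A|_g^2+2/L^2)$, which gives \eqref{eq:timelike-normal} once $\Delta_g$ is written as $\Box_g$. This renaming is legitimate because \eqref{eq:scalar-laplacian} with $\epsilon_1\epsilon_2=-1$ is exactly the wave operator. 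Fourth, substitute $\varepsilon=1$ into \eqref{eq:unified-tangential} to obtain \eqref{eq:timelike-tangential}.

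The only point needing care is that Theorem~\ref{thm:unified-conformal-biharmonic} and the lemmas behind it hold in the Lorentzian-domain case. Lemma~\ref{lem:conformal-reduction} was proved using only signed traces. Lemma~\ref{lem:normal-bitension} used a pseudo-orthonormal frame and the Codazzi identity $\diver_gA=\grad_g\tr_gA$, which holds for any nondegenerate hypersurface in a space form. The curvature trace \eqref{eq:curvature-trace} uses $\sum_i\epsilon_i h(e_i,e_i)=2$, and this sum is valid for either signature.

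Finally, $A$ is $g$-self-adjoint but need not be diagonalizable for Lorentzian $g$, so $|A|_g^2=\tr_g(A^2)$ carries no sign. This does not affect the argument, since no positivity was used. I expect no real obstacle; the main risk is a sign slip in $c$ or $\varepsilon$, so I would recheck \eqref{eq:curvature-trace} with $\epsilon_1=-1$.
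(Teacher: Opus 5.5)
Your proposal is correct and matches the paper. The paper obtains this corollary by directly specializing Theorem~\ref{thm:unified-conformal-biharmonic} with $\varepsilon=+1$ and $c=-L^{-2}$, so that $-\varepsilon|A|_g^2+2c=-(|A|_g^2+2L^{-2})$, and your check that the underlying lemmas use only signed traces (and hence hold for Lorentzian $g$) agrees with the paper's remarks in Lemmas~\ref{lem:conformal-reduction} and~\ref{lem:normal-bitension}.
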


\begin{remark}[Orientation invariance]
Replacing $\xi$ by $-\xi$ sends $A$, $H$, and $u$ to $-A$, $-H$, and $-u$,
respectively.  Equations \eqref{eq:unified-normal} and
\eqref{eq:unified-tangential} are unchanged.  Thus the system is independent
of the chosen orientation.
\end{remark}

\begin{remark}[Alternative scalar mean-curvature convention]
Some authors define
\begin{equation}
H_{\mathrm{tr}}:=\frac12\tr_gA,
\end{equation}
so that $H=\varepsilon H_{\mathrm{tr}}$.  If
$v=\lambda^2H_{\mathrm{tr}}$, then the unified system becomes
\begin{equation}
\Delta_gv+
\left(-\varepsilon|A|_g^2+2c\right)v=0,
\qquad
A(\grad_gv)+v\grad_gH_{\mathrm{tr}}=0.
\label{eq:trace-H-convention}
\end{equation}
This explains why formulas written with different scalar mean-curvature
conventions can appear to have different tangential signs.
\end{remark}

\section{The nonminimal CMC branch is rigid}

Constant mean curvature is the natural first test of the conformal problem:
most classified proper biharmonic hypersurfaces lie on CMC or isoparametric
branches, and one might hope that a variable source metric creates further
examples.  The theorem below shows that this hope fails in the spacelike
$\AdS_3$ CMC branch.  The obstruction is local and therefore requires no
completeness or compactness assumption.

\begin{theorem}[Nonzero CMC forces constant dilation]
\label{thm:CMC-rigidity}
Let $M^2$ be connected and let
\begin{equation}
\phi:(M^2,\bar g)\longrightarrow\AdS_3(L)
\end{equation}
be a two-sided spacelike conformal immersion satisfying
\begin{equation}
g=\phi^*h=\lambda^2\bar g,
\qquad
\lambda>0.
\end{equation}
Assume that $\phi:(M^2,\bar g)\to\AdS_3(L)$ is biharmonic and that its scalar
mean curvature, defined by $\mathbf H=H\xi$, is a nonzero constant,
\begin{equation}
H\equiv H_0\neq0.
\end{equation}
Then $\lambda$ is constant on $M$.  Moreover,
\begin{equation}
|A|_g^2=\frac{2}{L^2}.
\label{eq:CMC-A-square}
\end{equation}
\end{theorem}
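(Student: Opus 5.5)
The plan is to work with the tangential equation \eqref{eq:spacelike-tangential} from Corollary~\ref{cor:spacelike-system} first, since the CMC hypothesis kills its second term, and only afterwards use the normal equation \eqref{eq:spacelike-normal}. With $H\equiv H_0\neq0$ we have $\grad_gH=0$ and $u=H_0f$ with $f=\lambda^2$. Equation \eqref{eq:spacelike-tangential} therefore reduces to
\[
A(\grad_g f)=0 .
\]
I will argue by contradiction on the open set $U=\{p\in M:\ (\grad_gf)(p)\neq0\}$, aiming to show that $U$ is empty.

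On $U$ the vector $e_1=\grad_gf/|\grad_gf|_g$ is a unit eigenvector of $A$ with eigenvalue $k_1=0$. Since $g$ is Riemannian, $A$ is $g$-self-adjoint. Hence the orthogonal unit vector $e_2$ is also a principal direction, with eigenvalue $k_2=\tr_gA-k_1=-2H_0$, which is constant and nonzero. So on $U$ the surface has constant, distinct principal curvatures $0$ and $-2H_0$.

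The key step, and the one I expect to need the most care, is to rule this out using the Gauss--Codazzi equations. In the principal frame $\{e_1,e_2\}$, the Codazzi equation $(\nabla_{e_1}A)e_2=(\nabla_{e_2}A)e_1$ with constant principal curvatures gives
\[
(k_1-k_2)\,g(\nabla_{e_2}e_1,e_2)=0,
\qquad
(k_2-k_1)\,g(\nabla_{e_1}e_2,e_1)=0 .
\]
Because $k_1\neq k_2$, both connection coefficients vanish. Thus the orthonormal frame is parallel and the Gauss curvature satisfies $K=0$ on $U$. On the other hand, the Gauss equation for a spacelike surface in $\AdS_3(L)$ with $\varepsilon=-1$ reads $K=c+\varepsilon k_1k_2=-L^{-2}-0=-L^{-2}$. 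These two values of $K$ contradict each other. I will check the sign of the $\varepsilon\det A$ term in the Gauss equation against \eqref{eq:gauss-formula}, but the sign does not matter here because $\det A=0$ on $U$. Hence $U=\emptyset$, so $\grad_gf\equiv0$, and since $M$ is connected, $\lambda$ is constant.

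Finally, with $\lambda$ constant, $u=\lambda^2H_0$ is a nonzero constant and $\Delta_gu=0$. Equation \eqref{eq:spacelike-normal} then reduces to $(|A|_g^2-2/L^2)u=0$, and dividing by $u\neq0$ gives \eqref{eq:CMC-A-square}.
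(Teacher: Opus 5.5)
Your proposal is correct and follows essentially the same route as the paper's proof. The CMC hypothesis reduces the tangential equation to $A(\grad_g f)=0$, which is equivalent to the paper's $A(\grad_g u)=0$. Where the gradient is nonzero, the principal curvatures are the constants $0$ and $-2H_0$. Codazzi then forces a parallel frame and $K=0$, contradicting the Gauss value $K=-L^{-2}$, and the normal equation finally gives $|A|_g^2=2/L^2$. The only cosmetic differences are that you work with $f$ rather than $u=H_0 f$, and that you phrase the contradiction through the open set where $\grad_g f\neq 0$.
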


\begin{proof}
Set
\begin{equation}
f=\lambda^2,
\qquad
u=fH_0.
\end{equation}
Since $H$ is constant, the tangential equation
\eqref{eq:spacelike-tangential} reduces to
\begin{equation}
A(\grad_gu)=0.
\label{eq:A-grad-u-zero}
\end{equation}
Suppose, toward a contradiction, that $u$ is nonconstant.  Since $M$ is
connected, there exists a point at which $\grad_gu\neq0$.  On a sufficiently
small open neighbourhood $U$ of such a point, choose a local orthonormal frame
$\{e_1,e_2\}$ with
\begin{equation}
e_1=\frac{\grad_gu}{|\grad_gu|_g}.
\label{eq:e1-gradient}
\end{equation}
Equation \eqref{eq:A-grad-u-zero} gives
\begin{equation}
Ae_1=0.
\label{eq:Ae1-zero}
\end{equation}
Because $A$ is self-adjoint with respect to the Riemannian metric $g$, the
orthogonal direction is also principal; hence
\begin{equation}
Ae_2=k e_2
\label{eq:Ae2-k}
\end{equation}
for a smooth function $k$ on $U$.  In the spacelike convention,
$H=-\frac12\tr_gA$, and therefore
\begin{equation}
k=\tr_gA=-2H_0.
\label{eq:k-constant}
\end{equation}
Thus $k$ is a nonzero constant.

Let $\omega$ be the connection one-form of the orthonormal frame, defined by
\begin{equation}
\nabla_Xe_1=\omega(X)e_2,
\qquad
\nabla_Xe_2=-\omega(X)e_1.
\label{eq:connection-form}
\end{equation}
The Codazzi equation in a space form is
\begin{equation}
(\nabla_{e_1}A)e_2=(\nabla_{e_2}A)e_1.
\label{eq:codazzi-principal}
\end{equation}
Using \eqref{eq:Ae1-zero}, \eqref{eq:Ae2-k}, and the constancy of $k$, we
compute
\begin{align}
(\nabla_{e_1}A)e_2
&=
-k\omega(e_1)e_1,
\label{eq:codazzi-left}\\
(\nabla_{e_2}A)e_1
&=
-k\omega(e_2)e_2.
\label{eq:codazzi-right}
\end{align}
Since $k\neq0$ and $e_1,e_2$ are linearly independent,
\eqref{eq:codazzi-principal} implies
\begin{equation}
\omega(e_1)=\omega(e_2)=0.
\label{eq:omega-zero}
\end{equation}
Hence $\omega=0$ on $U$, the frame is parallel, and the Gaussian curvature of
$(U,g)$ is
\begin{equation}
K=0.
\label{eq:K-zero}
\end{equation}

On the other hand, the Gauss equation for a spacelike surface with timelike
unit normal in a Lorentzian space form is
\begin{equation}
K=c+\varepsilon\det A
=c-\det A.
\label{eq:gauss-spacelike}
\end{equation}
Here $c=-L^{-2}$ and, by \eqref{eq:Ae1-zero}, $\det A=0$.  Therefore
\begin{equation}
K=-\frac{1}{L^2},
\label{eq:K-negative}
\end{equation}
contradicting \eqref{eq:K-zero}.  It follows that $\grad_gu$ vanishes
everywhere.  Since $M$ is connected, $u$ is constant.  As $H_0\neq0$,
\begin{equation}
f=\frac{u}{H_0}=\lambda^2
\end{equation}
is constant, and therefore $\lambda$ is constant.

Finally, $u$ is a nonzero constant.  Substitution into the normal equation
\eqref{eq:spacelike-normal} gives
\begin{equation}
\left(|A|_g^2-\frac{2}{L^2}\right)u=0.
\end{equation}
Since $u\neq0$, equation \eqref{eq:CMC-A-square} follows.
\end{proof}

\begin{corollary}[Geometry of the nonminimal CMC branch]
\label{cor:CMC-branch-classification}
Under the hypotheses of Theorem~\ref{thm:CMC-rigidity}, the associated induced
isometric immersion
\begin{equation}
\phi:(M^2,g)\longrightarrow\AdS_3(L)
\end{equation}
is biharmonic and locally totally umbilical.  More precisely,
\begin{equation}
A=\pm\frac1L\operatorname{id},
\qquad
H=\mp\frac1L,
\qquad
K=-\frac{2}{L^2}.
\label{eq:CMC-umbilical-classification}
\end{equation}
Consequently, up to reversal of the unit normal and an ambient isometry, every
point has a neighbourhood congruent to an open subset of the standard
hyperbolic plane $\mathbb H^2(L/\sqrt2)\subset\AdS_3(L)$.  Thus the nonminimal
CMC conformal-biharmonic branch consists only of constant homothetic
reparametrizations of the standard proper biharmonic isometric branch.
\end{corollary}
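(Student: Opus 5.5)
By Theorem~\ref{thm:CMC-rigidity}, $\lambda$ is constant and $|A|_g^2=2/L^2$ on $M$. We also still have $H\equiv H_0\neq0$. So the plan is to combine these two pieces of pointwise information into umbilicity, and then use the Gauss equation and the classical description of totally umbilical spacelike surfaces.

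\textbf{Biharmonicity of the isometric immersion.} Since $f=\lambda^2$ is a positive constant, Lemma~\ref{lem:conformal-reduction} gives
\[
\tau_{2,\bar g}(\phi)=f^2\tau_{2,g}(\phi).
\]
Hence biharmonicity for $\bar g$ is equivalent to biharmonicity for $g$. Equivalently, equations \eqref{eq:spacelike-normal}--\eqref{eq:spacelike-tangential} hold with $u=H_0$.

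\textbf{Principal curvatures.} Since $g$ is Riemannian, $A$ is diagonalizable at each point with eigenvalues $k_1,k_2$. From $H=-\tfrac12\tr_gA$ we get $k_1+k_2=-2H_0$, and Theorem~\ref{thm:CMC-rigidity} gives $k_1^2+k_2^2=2/L^2$. Then
\[
(k_1-k_2)^2=2(k_1^2+k_2^2)-(k_1+k_2)^2=\frac{4}{L^2}-4H_0^2 .
\]
This is a nonnegative constant, but it need not vanish a priori. This is the main obstacle: the normal equation with constant $u$ alone gives only $|A|^2=2/L^2$, and the tangential equation is vacuous once $\grad u=0$.

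\textbf{Excluding the non-umbilic branch.} Suppose $k_1\neq k_2$. Then $k_1,k_2$ are distinct constants, and the principal frame is smooth. Codazzi with constant distinct principal curvatures, computed as in the proof of Theorem~\ref{thm:CMC-rigidity}, gives
\[
(k_1-k_2)\,\omega(e_1)=(k_1-k_2)\,\omega(e_2)=0,
\]
so $\omega=0$ and $K=0$. The Gauss equation \eqref{eq:gauss-spacelike} then forces
\[
k_1k_2=-\frac1{L^2}.
\]
Combining this with $k_1^2+k_2^2=2/L^2$ gives $(k_1+k_2)^2=0$, so $H_0=0$, a contradiction. (Equivalently, these are the flat isoparametric minimal surfaces.)

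\textbf{Umbilic conclusion.} Hence $k_1=k_2=-H_0$, and $2H_0^2=2/L^2$, so $A=\pm L^{-1}\operatorname{id}$ and $H=\mp L^{-1}$. The Gauss equation then gives
\[
K=-\frac1{L^2}-\frac1{L^2}=-\frac2{L^2}.
\]

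\textbf{Congruence.} A connected totally umbilical spacelike surface in $\AdS_3(L)$ with $|k|=1/L$ is an open piece of a totally geodesic slice's equidistant hyperbolic plane of radius $L/\sqrt2$. This is the standard uniqueness for umbilical hypersurfaces in space forms: the fundamental theorem of surfaces, given the same $g$ and $A$ as the model $\mathbb H^2(L/\sqrt2)$. Local congruence up to normal reversal and ambient isometry follows. The last sentence of the corollary is then immediate from the constancy of $\lambda$.
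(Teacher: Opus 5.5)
Your proposal is correct and follows essentially the same route as the paper. In both, constant $f$ reduces biharmonicity to $\tau_{2,g}(\phi)=0$, the non-umbilic alternative is excluded by Codazzi (parallel principal frame, $K=0$) plus Gauss ($k_1k_2=-L^{-2}$, forcing $H_0=0$), and local congruence comes from the fundamental theorem of hypersurfaces. Your remark that $(k_1-k_2)^2=4L^{-2}-4H_0^2$ is a global constant gives the umbilic/non-umbilic dichotomy a little more directly than the paper's argument on components of the non-umbilic set, but it does not change the proof.
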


\begin{proof}
When $f=\lambda^2$ is constant, equation
\eqref{eq:f-biharmonic-map-equation} reduces to
$f\tau_{2,g}(\phi)=0$.  Since $f>0$, one has $\tau_{2,g}(\phi)=0$.

It remains to identify the geometry.  Let $k_1,k_2$ denote the principal
curvatures on a local principal neighbourhood.  Theorem~\ref{thm:CMC-rigidity}
gives
\begin{equation}
k_1+k_2=-2H_0,
\qquad
k_1^2+k_2^2=\frac{2}{L^2}.
\end{equation}
Hence both the sum and product of $k_1,k_2$ are constant.  Suppose that the
nonumbilic set $\{k_1\neq k_2\}$ were nonempty, and choose a connected
principal neighbourhood contained in one of its components.  On this
neighbourhood the two principal curvatures are the distinct roots of a fixed
quadratic polynomial, and hence are individually constant.  The Codazzi
equations would then force the principal frame to be parallel, so $K=0$.
The Gauss equation would give
$k_1k_2=-L^{-2}$, and therefore
\begin{equation}
(k_1+k_2)^2=k_1^2+k_2^2+2k_1k_2=0,
\end{equation}
contradicting $H_0\neq0$.  Thus $k_1=k_2=k$ everywhere.  It follows that
$2k^2=2L^{-2}$, so $k=\pm L^{-1}$; connectedness fixes the sign.  Since
$H=-k$ in the spacelike convention, the first two identities in
\eqref{eq:CMC-umbilical-classification} follow, and the Gauss equation yields
$K=-L^{-2}-k^2=-2L^{-2}$.  The local congruence statement follows from the
fundamental theorem of hypersurfaces in a space form.
\end{proof}

\medskip
\noindent\textbf{Interpretation.}
The CMC conclusion is stronger than a restriction on one ansatz.  It says
that no local two-sided spacelike CMC surface can use a nonconstant conformal
factor to enter the proper biharmonic class.  The only surviving nonminimal
geometry is the totally umbilical branch already visible in the isometric
problem.  Consequently, every genuinely new conformal example must leave
constant mean curvature; this is exactly the transition made in the next
section.

\begin{remark}[A genuinely Lorentzian nonpositive-curvature branch]
\label{rem:lorentzian-riemannian-contrast}
The conclusion of Corollary~\ref{cor:CMC-branch-classification} sharply
contrasts with the corresponding Riemannian nonpositive-curvature picture.
For Riemannian space forms, Mohammed Cherif and Ou prove that no part of a
nonminimal totally umbilical hypersurface in a space form of nonpositive
curvature admits a biharmonic conformal immersion
\cite{CherifOu2026}; related $f$-biharmonic nonexistence statements for
totally umbilical surfaces in nonpositively curved Riemannian
three-manifolds appear in \cite{WangQinChen2024}.  In the present spacelike
$\AdS_3$ problem the unit normal is timelike, so $\varepsilon=-1$ and the
shape-operator contribution in
\eqref{eq:unified-normal} changes sign.  The surviving branch
\[
A=\pm L^{-1}\operatorname{id},
\qquad
K=-2L^{-2},
\]
is therefore not a Riemannian negative-curvature analogue in disguise; it is
a causal-sign effect intrinsic to the Lorentzian ambient geometry.
\end{remark}

\section{Leaving the rigid branch: open local non-CMC families}
\label{sec:cohomogeneity-one-existence}

The CMC theorem leaves two possibilities: either nonconstant dilation is
impossible altogether, or the constant-mean-curvature assumption is the
source of the rigidity.  This section proves the second alternative.  We use
one symmetry variable to reduce the surface equations to an analytic initial
value problem and show that nonconstant $H$ and nonconstant $\lambda$ occur
on an open set of data.

The construction is local but fully geometric.  A solution of the reduced
ODE determines first and second fundamental forms satisfying the
Gauss--Codazzi equations; the fundamental theorem of hypersurfaces then
produces a local immersion into anti-de Sitter space, unique up to ambient
isometry \cite[Chapter~7]{ONeill1983}.  The point is not merely to display
one exceptional solution, but to prove that the non-CMC regime is stable
under perturbation of its initial profile.

\subsection{A symmetry reduction adapted to principal curvature lines}

At this stage, $t$ is only a symmetry coordinate for the intrinsic metric and
second fundamental form.  Section~\ref{sec:analytical-rotational-reduction}
identifies the additional ambient moving-frame condition and proves when the
$t$-translations are generated by a one-parameter subgroup of
$\operatorname{Isom}(\AdS_3(L))$.

Let $I\subset\mathbb R$ be an interval and consider
\begin{equation}
M=I\times J,
\qquad
g=ds^2+\rho(s)^2dt^2,
\qquad
\rho>0.
\label{eq:spacelike-warped-metric}
\end{equation}
Introduce the oriented orthonormal frame
\begin{equation}
e_1=\partial_s,
\qquad
e_2=\rho^{-1}\partial_t,
\qquad
q:=\frac{\rho'}{\rho}.
\label{eq:spacelike-principal-frame}
\end{equation}
Assume that the frame is principal for the shape operator,
\begin{equation}
Ae_1=k_1(s)e_1,
\qquad
Ae_2=k_2(s)e_2,
\label{eq:principal-shape-ansatz}
\end{equation}
and that the weighted mean-curvature function depends only on $s$,
\begin{equation}
u=u(s)=\lambda^2H.
\end{equation}
The Levi--Civita connection of \eqref{eq:spacelike-warped-metric} is
\begin{equation}
\nabla_{e_1}e_1=0,
\qquad
\nabla_{e_1}e_2=0,
\qquad
\nabla_{e_2}e_1=qe_2,
\qquad
\nabla_{e_2}e_2=-qe_1,
\label{eq:spacelike-warped-connection}
\end{equation}
and therefore
\begin{equation}
K=-\frac{\rho''}{\rho}=-(q'+q^2).
\label{eq:warped-gaussian-curvature}
\end{equation}
Since the unit normal is timelike, the Gauss equation is
\begin{equation}
K=-\frac1{L^2}-k_1k_2.
\label{eq:spacelike-gauss-principal}
\end{equation}
Equations \eqref{eq:warped-gaussian-curvature} and
\eqref{eq:spacelike-gauss-principal} give
\begin{equation}
q'=\frac1{L^2}+k_1k_2-q^2.
\label{eq:q-spacelike}
\end{equation}
The Codazzi equation gives
\begin{equation}
k_2'=q(k_1-k_2).
\label{eq:k2-codazzi}
\end{equation}
Our spacelike mean-curvature convention is
\begin{equation}
H=-\frac{k_1+k_2}{2}.
\label{eq:H-principal-spacelike}
\end{equation}
Moreover,
\begin{equation}
\grad_gu=u'e_1,
\qquad
\Delta_gu=u''+qu'.
\label{eq:spacelike-radial-laplacian}
\end{equation}
Thus the normal conformal-biharmonic equation
\eqref{eq:spacelike-normal} becomes
\begin{equation}
u''+qu'
+
\left(k_1^2+k_2^2-\frac{2}{L^2}\right)u=0,
\label{eq:cohom1-normal-equation}
\end{equation}
whereas the tangential equation \eqref{eq:spacelike-tangential} is
\begin{equation}
k_1u'=uH'.
\label{eq:cohom1-tangential-equation}
\end{equation}
Using \eqref{eq:H-principal-spacelike}, this is equivalent to
\begin{equation}
k_1'+k_2'=-2k_1\frac{u'}{u}
\label{eq:sum-principal-equation}
\end{equation}
wherever $u\neq0$.

Set
\begin{equation}
p:=u'.
\end{equation}
Combining
\eqref{eq:q-spacelike}, \eqref{eq:k2-codazzi},
\eqref{eq:cohom1-normal-equation}, and
\eqref{eq:sum-principal-equation} gives the first-order system
\begin{equation}
\boxed{
\begin{aligned}
\rho'&=q\rho,\\
u'&=p,\\
p'&=-qp-
\left(k_1^2+k_2^2-\frac{2}{L^2}\right)u,\\
k_2'&=q(k_1-k_2),\\
k_1'&=-2k_1\frac{p}{u}-q(k_1-k_2),\\
q'&=\frac1{L^2}+k_1k_2-q^2.
\end{aligned}}
\label{eq:spacelike-cohom1-system}
\end{equation}
This is the basic first-order cohomogeneity-one system in the present sign
conventions.

It is often numerically preferable to introduce
\begin{equation}
w:=\frac{p}{u}=(\log|u|)'.
\label{eq:w-definition}
\end{equation}
Then the geometric variables satisfy the autonomous system
\begin{equation}
\boxed{
\begin{aligned}
k_1'&=-2k_1w-q(k_1-k_2),\\
k_2'&=q(k_1-k_2),\\
q'&=\frac1{L^2}+k_1k_2-q^2,\\
w'&=-qw-w^2-
\left(k_1^2+k_2^2-\frac{2}{L^2}\right),
\end{aligned}}
\label{eq:reduced-autonomous-system}
\end{equation}
followed by the quadratures
\begin{equation}
(\log\rho)'=q,
\qquad
(\log|u|)'=w.
\label{eq:rho-u-quadratures}
\end{equation}

\subsection{Analytic initial data and robust nonconstant solutions}

\begin{theorem}[Local solutions from analytic initial data]
\label{thm:cohom1-local-existence}
Fix $L>0$ and initial data
\begin{equation}
\rho(0)=\rho_0>0,
\quad
u(0)=u_0\neq0,
\quad
p(0)=p_0,
\quad
k_1(0)=a_0,
\quad
k_2(0)=b_0,
\quad
q(0)=q_0.
\label{eq:general-initial-data}
\end{equation}
Then there is a unique real-analytic solution of
\eqref{eq:spacelike-cohom1-system} on some interval
$(-\delta,\delta)$.  Define
\begin{equation}
H=-\frac{k_1+k_2}{2}.
\end{equation}
If
\begin{equation}
H_0:=-\frac{a_0+b_0}{2}\neq0,
\qquad
\frac{u_0}{H_0}>0,
\label{eq:admissible-initial-data}
\end{equation}
then, after decreasing $\delta$ if necessary,
\begin{equation}
\lambda^2:=\frac{u}{H}>0
\end{equation}
and there exists a spacelike immersion
\begin{equation}
\phi:
\bigl(( -\delta,\delta)\times J,g\bigr)
\longrightarrow \AdS_3(L)
\end{equation}
with induced metric \eqref{eq:spacelike-warped-metric}, shape operator
\eqref{eq:principal-shape-ansatz}, and timelike unit normal.  The conformally
reparametrized immersion
\begin{equation}
\phi:
\bigl(( -\delta,\delta)\times J,\lambda^{-2}g\bigr)
\longrightarrow \AdS_3(L)
\end{equation}
is biharmonic.  It is proper biharmonic, in the sense of being
nonharmonic, because
\begin{equation}
\tau_{\lambda^{-2}g}(\phi)=2u\xi\neq0.
\end{equation}
Locally, the immersion is unique up to an ambient isometry of
$\AdS_3(L)$.
\end{theorem}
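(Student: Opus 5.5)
The proof splits into four steps: solve the ODE system, choose a positive dilation, build the immersion from Gauss--Codazzi data, and check that the resulting map is biharmonic and nonharmonic.

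\textbf{Step 1: analytic solution.} The right-hand side of \eqref{eq:spacelike-cohom1-system} is a polynomial in $(\rho,u,p,k_1,k_2,q)$ except for the single term $p/u$. It is therefore real-analytic on the open set $\{u\neq0\}$, which contains the initial point because $u_0\neq0$. The Cauchy--Kovalevskaya theorem for ODEs (analytic Picard--Lindelöf) then gives a unique real-analytic solution on some $(-\delta,\delta)$. Shrinking $\delta$ by continuity, we may keep $\rho>0$, $u\neq0$, and $H=-(k_1+k_2)/2\neq0$ throughout. Since $u/H$ is continuous and equals $u_0/H_0>0$ at $s=0$, it stays positive after a further shrinking of $\delta$. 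So $\lambda:=\sqrt{u/H}$ is a positive analytic function of $s$, and $\lambda^2H=u$ by construction.

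\textbf{Step 2: the immersion.} On $(-\delta,\delta)\times J$, take $g=ds^2+\rho^2dt^2$ and the symmetric tensor $A=\operatorname{diag}(k_1,k_2)$ in the frame \eqref{eq:spacelike-principal-frame}.
\begin{itemize}
\item The equation for $q'$ together with $\rho'=q\rho$ gives $\rho''/\rho=q'+q^2=L^{-2}+k_1k_2$. By \eqref{eq:warped-gaussian-curvature} this is exactly the Gauss equation \eqref{eq:spacelike-gauss-principal}.
\item The Codazzi equation has two components for a warped metric with $s$-dependent principal curvatures. The component $(\nabla_{e_2}A)e_1=(\nabla_{e_1}A)e_2$ reduces to $k_2'=q(k_1-k_2)$, which is imposed in the system.
\item The other component is automatic, because $k_1$ and $k_2$ do not depend on $t$.
\end{itemize}
I will verify this Codazzi reduction explicitly using \eqref{eq:spacelike-warped-connection}. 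This is the one place where sign conventions must be tracked carefully against \eqref{eq:weingarten}--\eqref{eq:gauss-formula} with $\varepsilon=-1$. The fundamental theorem for nondegenerate hypersurfaces in the Lorentzian space form $\AdS_3(L)$ \cite[Chapter~7]{ONeill1983} then produces, on the simply connected domain (taking $J$ an interval), a spacelike immersion with timelike unit normal, induced metric $g$, and shape operator $A$. The same theorem gives uniqueness up to an ambient isometry.

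\textbf{Step 3: biharmonicity and properness.} By Corollary~\ref{cor:spacelike-system}, it suffices to check \eqref{eq:spacelike-normal} and \eqref{eq:spacelike-tangential} for $u=\lambda^2H$.
\begin{itemize}
\item Since $u$ depends only on $s$, we have $\grad_gu=u'e_1$ and $\Delta_gu=u''+qu'$. The $p$-equation is then precisely \eqref{eq:cohom1-normal-equation}, because $|A|_g^2=k_1^2+k_2^2$.
\item For the tangential equation, $\grad_gH=H'e_1$ and $A(\grad_gu)=k_1u'e_1$. So \eqref{eq:spacelike-tangential} reduces to $k_1u'=uH'$, that is, $-2k_1p=u(k_1'+k_2')$.
\item Adding the $k_1'$ and $k_2'$ equations gives $k_1'+k_2'=-2k_1p/u$, as required.
\end{itemize}
Properness follows from Lemma~\ref{lem:conformal-reduction} and \eqref{eq:tension-isometric}: $\tau_{\lambda^{-2}g}(\phi)=\lambda^2\cdot2H\xi=2u\xi$, which is nonzero because $u\neq0$ on the interval.

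\textbf{Main obstacle.} The analytic parts are routine, so the main difficulty is the bookkeeping in Step 2. It has two parts:
\begin{itemize}
\item confirming that the full Gauss--Codazzi system for this ansatz reduces exactly to the $q'$ and $k_2'$ equations, with the signs correct for a timelike normal;
\item stating the fundamental theorem in the pseudo-Riemannian setting, where the sign $\varepsilon$ enters both the Gauss formula and the compatibility conditions.
\end{itemize}
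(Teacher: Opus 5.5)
Your proposal is correct and follows the paper's proof essentially step for step: analytic ODE existence on $\{u\neq0\}$, continuity to keep $H\neq0$ and $u/H>0$, the fundamental theorem of hypersurfaces for the Gauss--Codazzi data on a simply connected domain, and identification of the $p$-equation and the summed $k_1',k_2'$ equations with the normal and tangential conformal-biharmonic equations. You also fill in details the paper only asserts: the Gauss equation via $\rho''/\rho=q'+q^2$, both components of Codazzi (the $e_1$-component being automatic by $t$-independence), and the tangential equation obtained by adding the $k_1'$ and $k_2'$ equations.
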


\begin{proof}
The right-hand side of \eqref{eq:spacelike-cohom1-system} is real analytic on
\begin{equation}
\Omega=
\{(\rho,u,p,k_1,k_2,q):\rho>0,\ u\neq0\}.
\end{equation}
The analytic ordinary differential equation theorem therefore gives a unique
real-analytic solution through every initial point in $\Omega$.

Equations \eqref{eq:q-spacelike} and \eqref{eq:k2-codazzi} are precisely the
Gauss and Codazzi equations for the metric
\eqref{eq:spacelike-warped-metric} and the self-adjoint field
\eqref{eq:principal-shape-ansatz}.  On a simply connected coordinate
neighbourhood, the fundamental theorem of hypersurfaces in a
semi-Riemannian space form therefore produces the required spacelike
immersion into $\AdS_3(L)$, unique up to ambient isometry.

Because $H_0\neq0$ and $u_0/H_0>0$, continuity allows us to shrink the
interval so that $H$ has no zero and $u/H$ remains positive.  Finally, $u$ remains nonzero after another possible restriction of the
interval.  Therefore the conformally changed tension field is nonzero.
Equations
\eqref{eq:cohom1-normal-equation} and
\eqref{eq:cohom1-tangential-equation} are exactly the normal and tangential
conformal-biharmonic equations.  Hence the immersion with domain metric
$\lambda^{-2}g$ is biharmonic.
\end{proof}

The preceding theorem becomes a genuine non-CMC existence theorem on an open
set of initial data.

\begin{corollary}[An open non-CMC, nonconstant-dilation family]
\label{cor:open-nonconstant-family}
In addition to \eqref{eq:admissible-initial-data}, assume
\begin{equation}
a_0p_0\neq0,
\qquad
p_0(H_0-a_0)\neq0.
\label{eq:open-nonconstant-conditions}
\end{equation}
Then both the mean curvature $H$ and the conformal dilation $\lambda$ are
nonconstant on every sufficiently small neighbourhood of $s=0$.
The conditions
\eqref{eq:admissible-initial-data}--\eqref{eq:open-nonconstant-conditions}
define an open subset of the initial-data space.
\end{corollary}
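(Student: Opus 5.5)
We prove the corollary by computing the first derivatives of $H$ and $\lambda^2=u/H$ at $s=0$ directly from the system \eqref{eq:spacelike-cohom1-system}, and then using continuity.

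\textbf{Derivative of $H$.} From $H=-\tfrac12(k_1+k_2)$ and the equation \eqref{eq:sum-principal-equation}, we get $H'=k_1u'/u$. This is just the tangential equation \eqref{eq:cohom1-tangential-equation}. At $s=0$ it gives
\[
H'(0)=\frac{a_0p_0}{u_0},
\]
which is nonzero by the first condition in \eqref{eq:open-nonconstant-conditions}. The solution is real analytic, so $H'$ is continuous. Hence $H'\neq0$ on a neighbourhood of $0$, and $H$ is nonconstant on every neighbourhood of $s=0$. In fact a nonzero derivative at a single point already suffices for this.

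\textbf{Derivative of $\lambda$.} Set $f=\lambda^2=u/H$. Its logarithmic derivative is
\[
\frac{f'}{f}=\frac{u'}{u}-\frac{H'}{H}=\frac{p}{u}\Bigl(1-\frac{k_1}{H}\Bigr)=\frac{p\,(H-k_1)}{uH}.
\]
At $s=0$ this equals $p_0(H_0-a_0)/(u_0H_0)$. It is nonzero by the second condition in \eqref{eq:open-nonconstant-conditions}, since $u_0\neq0$ and $H_0\neq0$. Therefore $f'(0)\neq0$. Because $\lambda=\sqrt f$ with $f>0$, we have $\lambda'(0)=f'(0)/(2\lambda(0))\neq0$. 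So $\lambda$ is nonconstant on every neighbourhood of $0$.

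\textbf{Openness.} The initial-data space is an open subset of $\mathbb R^6$, namely $\{\rho_0>0,\ u_0\neq0\}$. Each defining condition is strict and continuous in $(\rho_0,u_0,p_0,a_0,b_0,q_0)$:
\begin{itemize}
\item $H_0\neq0$ is the nonvanishing of a linear function.
\item $u_0/H_0>0$ is the positivity of a function that is continuous wherever $H_0\neq0$.
\item $a_0p_0\neq0$ and $p_0(H_0-a_0)\neq0$ are nonvanishing conditions on polynomials.
\end{itemize}
The set they define is a finite intersection of open sets, hence open. It is also nonempty. For example, take $u_0=1$, $p_0=1$, $a_0=1$, $b_0=-3$. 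Then $H_0=1$, so $u_0/H_0=1>0$, $a_0p_0=1\neq0$, and $p_0(H_0-a_0)=0$, which fails. Changing to $b_0=-5$ gives $H_0=2$, so $u_0/H_0>0$ and $H_0-a_0=1\neq0$; all conditions now hold.

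\textbf{Main obstacle.} The only point needing care is keeping the sign convention $H=-\tfrac12(k_1+k_2)$ consistent in the derivative of $H$. Once that is fixed, the argument is a direct first-order computation.
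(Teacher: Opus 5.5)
Your proof is correct and follows essentially the same route as the paper. Like the paper, you get $H'(0)=a_0p_0/u_0$ from the tangential equation and $(\lambda^2)'(0)=p_0(H_0-a_0)/H_0^2\neq0$ from $(\lambda^2)'=p(H-k_1)/H^2$ (you write it as a logarithmic derivative), then observe that all defining conditions are strict inequalities; your explicit nonemptiness check is a small addition the paper leaves implicit, though its concrete germ in the next subsection also lies in the set.
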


\begin{proof}
Equation \eqref{eq:cohom1-tangential-equation} gives
\begin{equation}
H'(0)=a_0\frac{p_0}{u_0}.
\label{eq:Hprime-initial}
\end{equation}
Furthermore,
\begin{align}
(\lambda^2)'
&=
\left(\frac{u}{H}\right)'
=
\frac{pH-uH'}{H^2}
=
\frac{p(H-k_1)}{H^2},
\label{eq:lambda-prime-general}
\end{align}
and hence
\begin{equation}
(\lambda^2)'(0)
=
\frac{p_0(H_0-a_0)}{H_0^2}.
\label{eq:lambda-prime-initial}
\end{equation}
The two inequalities in \eqref{eq:open-nonconstant-conditions} make these
derivatives nonzero.  All imposed conditions are strict inequalities and are
therefore open.
\end{proof}

\medskip
\noindent\textbf{Why openness matters.}
The inequalities in Corollary~\ref{cor:open-nonconstant-family} are strict,
so the new behavior survives small changes of the initial data.  The
nonconstant-dilation solutions are therefore not a single finely tuned
counterexample to CMC rigidity; they occupy a genuine open region of the
local solution space.

\begin{proposition}[Local parameter count]
\label{prop:cohom1-parameter-count}
Fix $L>0$ and consider pointed local germs in the open non-CMC branch of
Corollary~\ref{cor:open-nonconstant-family}.  Modulo ambient isometry,
positive rescaling of the orbit coordinate, and constant homothety of the
source metric, the generic cohomogeneity-one germ is locally determined by
the four reduced initial quantities
\begin{equation}
(k_{1,0},k_{2,0},q_0,w_0),
\qquad
w_0=\frac{p_0}{u_0}.
\label{eq:four-reduced-moduli}
\end{equation}
Equivalently, the open family contains a four-parameter family of reduced
profile germs after these natural gauges are removed.
\end{proposition}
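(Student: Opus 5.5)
The plan is to count parameters directly in the first-order system \eqref{eq:spacelike-cohom1-system} and then remove the three gauges one at a time. A pointed germ is fixed by the six initial values $(\rho_0,u_0,p_0,a_0,b_0,q_0)$ in $\Omega$. By Theorem~\ref{thm:cohom1-local-existence} it is determined, up to ambient isometry, by these values, since the fundamental theorem of hypersurfaces reconstructs $\phi$ uniquely from $(g,A)$ up to $\operatorname{Isom}(\AdS_3(L))$. So ambient isometry is already built in, and it remains to show that the other two gauges remove exactly $\rho_0$ and the scale of $u$.

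The first step is to pass to the autonomous system \eqref{eq:reduced-autonomous-system}. Since $u\neq0$ on the branch, $w=p/u$ is analytic, and the map $(\rho,u,p,k_1,k_2,q)\mapsto(\rho,u,k_1,k_2,q,w)$ is an analytic diffeomorphism of $\Omega$ onto its image. The four variables $(k_1,k_2,q,w)$ form a closed subsystem: the right-hand sides of \eqref{eq:reduced-autonomous-system} involve neither $\rho$ nor $u$. The remaining variables $\rho,u$ are then recovered by the quadratures \eqref{eq:rho-u-quadratures}, with integration constants $\rho_0$ and $u_0$.

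The second step is to identify the two constants as gauges.

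\begin{itemize}
\item \emph{The constant $\rho_0$.} Rescaling $t\mapsto ct$ with $c>0$ replaces $\rho$ by $c^{-1}\rho$. The metric \eqref{eq:spacelike-warped-metric} is unchanged as a tensor, and $q=\rho'/\rho$, $e_2$ and $A$ are invariant. Hence $\rho_0$ can be normalized to $1$.
\item \emph{The constant $u_0$.} Replacing $\bar g$ by $\mu^{-1}\bar g$ with a constant $\mu>0$ gives $\lambda^2\mapsto\mu\lambda^2$, so $u\mapsto\mu u$. Both \eqref{eq:unified-normal} and \eqref{eq:unified-tangential} are linear homogeneous in $u$, and $w$ is invariant. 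Hence $|u_0|$ can be normalized.
\item \emph{The sign of $u_0$.} This is not free. By the admissibility condition \eqref{eq:admissible-initial-data} it is determined by the sign of $H_0$, hence by $(a_0,b_0)$. Alternatively, the orientation remark removes it, by flipping $\xi$ together with $A,H,u$.
\end{itemize}

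The quotient germ is therefore determined by $(k_{1,0},k_{2,0},q_0,w_0)$, by uniqueness for the analytic initial value problem. Conversely, the open conditions \eqref{eq:admissible-initial-data}--\eqref{eq:open-nonconstant-conditions} rewrite in reduced variables. Condition \eqref{eq:admissible-initial-data} becomes $a_0+b_0\neq0$, with the sign of $u_0$ then chosen to match. Conditions \eqref{eq:open-nonconstant-conditions} become $a_0w_0\neq0$ and $w_0(H_0-a_0)\neq0$. These define a nonempty open subset of $\mathbb R^4$, which gives a genuine four-parameter family.

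The main obstacle is justifying the word \emph{generic}, that is, showing that distinct reduced data give inequivalent germs. I would argue this through invariants. The functions $k_1,k_2$ are the principal curvatures, which are invariant under isometry and under $t$-rescaling. $q$ is the geodesic curvature of the $t$-curves, which is intrinsic. $w=(\log|u|)'$ is invariant under constant homothety because $u$ only scales. Evaluated at the base point, these four quantities are therefore gauge-invariant. This argument needs the orbit coordinate $s$ and the base point to be canonical, so the same caveat about the pointed germ must be kept.

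A further genericity hypothesis is needed to exclude extra symmetry, such as umbilic points with $k_1=k_2$, where the principal frame is not unique. The hypothesis $k_{1,0}\neq k_{2,0}$, which is an open condition, secures this. It makes the frame, and hence $s$ up to sign and translation, canonical. The sign of $s$ is then fixed by the orientation of the pointed germ.
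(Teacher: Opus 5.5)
Your proposal is correct and follows the paper's proof. Both arguments use the closed autonomous subsystem for $(k_1,k_2,q,w)$, the quadratures that introduce $\rho_0$ and $|u_0|$, the removal of these constants by orbit-coordinate rescaling and constant source homothety, and ambient isometry via the fundamental theorem of hypersurfaces. Your additional invariance argument goes beyond what the paper proves. It yields inequivalence of distinct reduced data only up to the discrete normal and profile reversals. For example, reversing $\xi$ sends $(k_1,k_2,q,w)$ to $(-k_1,-k_2,q,w)$, so your ``alternatively'' remark does not remove the sign of $u_0$ independently of $H_0$. The paper records exactly this caveat in the remark following the proposition.
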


\begin{proof}
The autonomous system \eqref{eq:reduced-autonomous-system} shows that the
reduced geometric profile
\[
(k_1,k_2,q,w)
\]
is uniquely determined by the four values in
\eqref{eq:four-reduced-moduli}.  The remaining quadratures
\eqref{eq:rho-u-quadratures} introduce the positive constants
$\rho_0$ and $|u_0|$.  A positive change of orbit coordinate
$\widetilde t=\alpha t$ replaces $\rho$ by $\rho/\alpha$ and may be
used to normalize $\rho_0$.  Multiplying $u$ by a positive constant
multiplies $\lambda^2=u/H$ by the same constant and therefore replaces the
source metric $\bar g=\lambda^{-2}g$ by a constant homothety; it leaves
$(k_1,k_2,q,w)$ unchanged.  Finally, the fundamental theorem of
hypersurfaces already identifies immersions with the same first and second
fundamental forms up to ambient isometry.  Thus, for pointed germs, the four
reduced initial quantities are the continuous profile parameters left after
the stated gauges are removed.
\end{proof}

\begin{remark}[Meaning of the dimension count]
The qualifier ``pointed'' fixes the reference value $s=0$.  Passing to
unpointed germs introduces the usual translation freedom in the autonomous
profile parameter; no such quotient is needed for the initial-value
statements used in this paper.  Residual discrete operations, including
normal reversal, profile reversal, and orbit reversal, may identify reduced
initial data, and special data can have larger stabilizers.  Thus the number
four is the continuous dimension of the generic free parameter stratum (or
local orbifold stratum), not a claim that the full quotient is everywhere a
smooth moduli manifold.
\end{remark}

\subsection{A concrete local solution germ}

For $L=1$, choose
\begin{equation}
\rho_0=1,
\qquad
u_0=1,
\qquad
p_0=1,
\qquad
k_{1,0}=-1,
\qquad
k_{2,0}=-2,
\qquad
q_0=0.
\label{eq:strong-local-initial-data}
\end{equation}
Then
\begin{equation}
H_0=\frac32,
\qquad
\lambda_0^2=\frac23,
\qquad
H'(0)=-1,
\qquad
(\lambda^2)'(0)=\frac{10}{9}.
\end{equation}
The solution supplied by Theorem~\ref{thm:cohom1-local-existence} has the
Taylor expansions
\begin{align}
u(s)
&=1+s-\frac32s^2+O(s^3),
\label{eq:u-local-series}\\
k_1(s)
&=-1+2s-\frac{15}{2}s^2+O(s^3),
\label{eq:k1-local-series}\\
k_2(s)
&=-2+\frac32s^2+O(s^3),
\label{eq:k2-local-series}\\
q(s)
&=3s-2s^2+O(s^3),
\label{eq:q-local-series}\\
\rho(s)
&=1+\frac32s^2-\frac23s^3+O(s^4),
\label{eq:rho-local-series}\\
H(s)
&=\frac32-s+3s^2+O(s^3),
\label{eq:H-local-series}\\
\lambda^2(s)
&=\frac23+\frac{10}{9}s-\frac{43}{27}s^2+O(s^3).
\label{eq:lambda-local-series}
\end{align}
Thus this initial condition produces an actual analytic proper biharmonic
conformal immersion for which both $H$ and $\lambda$ are nonconstant.

\section{From reduced data to genuine rotational surfaces}
\label{sec:analytical-rotational-reduction}

Section~
ef{sec:cohomogeneity-one-existence} constructs surfaces from
intrinsic metric and curvature data, but it does not yet tell the reader
which ambient isometries generate the symmetry or how to write the surface
itself in coordinates.  This section supplies that missing extrinsic step.

The moving frame produces a constant element of $\mathfrak{so}(2,2)$ whose
flow generates the orbit direction.  A single conserved scalar determines
whether that flow is elliptic, hyperbolic, or parabolic.  The parabolic case
is then especially tractable: the geometric variables collapse to one
third-order scalar equation, and the immersion can be reconstructed by
quadratures.  This is the point at which the existence theory becomes a
concrete family of ambient surfaces.

\subsection{The moving frame and the ambient symmetry generator}

Let
\begin{equation}
    \operatorname{AdS}_3(L)
    =
    \left\{
        X\in\mathbb{R}^{2,2}:
        \langle X,X\rangle_{2,2}=-L^2
    \right\},
\end{equation}
where
\begin{equation}
    \langle X,X\rangle_{2,2}
    =
    -X_0^2-X_1^2+X_2^2+X_3^2.
\end{equation}
Consider a nondegenerate cohomogeneity-one immersion
\begin{equation}
    X:I\times J\longrightarrow\operatorname{AdS}_3(L)
\end{equation}
whose induced metric is
\begin{equation}
    g
    =
    \sigma\,ds^2+\eta\,\rho(s)^2dt^2,
    \qquad
    \sigma,\eta\in\{-1,+1\},
    \qquad
    \rho>0.
    \label{eq:rot-metric-analytical}
\end{equation}
Define the pseudo-orthonormal tangent frame
\begin{equation}
    e_1=\partial_s,
    \qquad
    e_2=\rho^{-1}\partial_t,
\end{equation}
so that
\begin{equation}
    \langle e_1,e_1\rangle=\sigma,
    \qquad
    \langle e_2,e_2\rangle=\eta.
\end{equation}
Let $\xi$ be a unit normal and put
\begin{equation}
    \varepsilon
    =
    \langle\xi,\xi\rangle
    =
    -\sigma\eta.
    \label{eq:epsilon-sign-relation}
\end{equation}
We assume that the orbit and profile directions are principal:
\begin{equation}
    Ae_1=k_1e_1,
    \qquad
    Ae_2=k_2e_2.
    \label{eq:rot-principal-directions}
\end{equation}
Set
\begin{equation}
    q=\frac{\rho'}{\rho}.
\end{equation}
The scalar mean curvature, with
$\mathbf H=H\xi$, is
\begin{equation}
    H
    =
    \frac{\varepsilon}{2}(k_1+k_2).
    \label{eq:rot-H-analytical}
\end{equation}

The Levi--Civita connection of \eqref{eq:rot-metric-analytical} is
\begin{align}
    \nabla_{e_1}e_1&=0,
    &
    \nabla_{e_1}e_2&=0,
    \\
    \nabla_{e_2}e_1&=qe_2,
    &
    \nabla_{e_2}e_2&=-\sigma\eta q e_1.
\end{align}
Consequently, the Gauss and Codazzi equations are
\begin{align}
    q'
    &=
    \sigma
    \left(
        \frac{1}{L^2}-\varepsilon k_1k_2
    \right)
    -q^2,
    \label{eq:rot-gauss-analytical}\\
    k_2'
    &=
    q(k_1-k_2).
    \label{eq:rot-codazzi-analytical}
\end{align}

Let
\begin{equation}
    E_0=\frac{X}{L}.
\end{equation}
Then
\begin{equation}
    \mathcal F=(E_0,e_1,e_2,\xi)
\end{equation}
is a pseudo-orthonormal ambient frame with Gram matrix
\begin{equation}
    \operatorname{diag}(-1,\sigma,\eta,\varepsilon).
\end{equation}
Using the flat ambient connection $D$ and the Gauss--Weingarten
relations, differentiation in the orbit direction gives
\begin{align}
    D_tE_0
    &=
    \frac{\rho}{L}e_2,
    \\
    D_te_1
    &=
    \rho q e_2,
    \\
    D_te_2
    &=
    \rho
    \left(
        \frac{\eta}{L}E_0
        -\sigma\eta q e_1
        +\varepsilon\eta k_2\xi
    \right),
    \\
    D_t\xi
    &=
    -\rho k_2e_2.
\end{align}
Equivalently,
\begin{equation}
    \mathcal F_t=\mathcal F M,
    \label{eq:F-t-M}
\end{equation}
where
\begin{equation}
    M
    =
    \rho
    \begin{pmatrix}
        0 & 0 & \eta/L & 0\\
        0 & 0 & -\sigma\eta q & 0\\
        1/L & q & 0 & -k_2\\
        0 & 0 & \varepsilon\eta k_2 & 0
    \end{pmatrix}.
    \label{eq:orbit-generator-matrix}
\end{equation}

\begin{lemma}[Rank-two normal forms needed below]
\label{lem:rank-two-normal-forms}
Let $(V,\langle\cdot,\cdot\rangle)$ have signature $(2,2)$ and let
$B\in\mathfrak{so}(V)$ have rank two.

\begin{enumerate}
\item If $B^3=\chi B$ with $\chi\neq0$, then
$W:=\operatorname{im}B$ is nondegenerate,
$\ker B=W^\perp$, and $B^2=\chi\operatorname{id}$ on $W$.  If
$\chi<0$, the plane $W$ is definite and $B|_W$ is a rotation after
scaling; if $\chi>0$, the plane $W$ is Lorentzian and $B|_W$ is a boost
after scaling.

\item If $B^3=0$ and $B^2\neq0$, then $B$ preserves a null flag
\[
0\subset\operatorname{im}B^2\subset\operatorname{im}B
\subset\ker B^2\subset V.
\]
After an ambient orthogonal change of basis and a nonzero rescaling of the
orbit parameter, $B$ is one of the two causal index-three null-rotation
blocks.  In null coordinates with metric
$-2\,dU\,dV-dY^2+dZ^2$, representatives are
\begin{equation}
B_{\mathrm{par}}^{(+)}
=
\begin{pmatrix}
0&0&0&0\\
0&0&0&1\\
0&0&0&0\\
1&0&0&0
\end{pmatrix},
\qquad
B_{\mathrm{par}}^{(-)}
=
\begin{pmatrix}
0&0&0&0\\
0&0&-1&0\\
1&0&0&0\\
0&0&0&0
\end{pmatrix}.
\label{eq:parabolic-normal-form}
\end{equation}
The sign records whether the non-null direction in $\operatorname{im}B$
is spacelike or timelike.

\item If $B^2=0$, then
$\operatorname{im}B=\ker B$ is a totally null two-plane.  This is a
distinct rank-two nilpotent type and is not conjugate to either block in
item~2.  For example,
\begin{equation}
B_0=
\begin{pmatrix}
0&0&-1&1\\
0&0&0&0\\
0&1&0&0\\
0&1&0&0
\end{pmatrix}
\label{eq:square-zero-nilpotent}
\end{equation}
is skew-adjoint for $-2\,dU\,dV-dY^2+dZ^2$, has rank two, and satisfies
$B_0^2=0$.
\end{enumerate}
\end{lemma}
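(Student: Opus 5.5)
The proof uses only linear algebra of skew-adjoint operators on the signature-$(2,2)$ space $V$. The basic tool is $\ker B=(\operatorname{im}B)^\perp$, which holds for every $B\in\mathfrak{so}(V)$: if $w=Bv$, then $\langle x,Bv\rangle=-\langle Bx,v\rangle$, so $x\perp\operatorname{im}B$ exactly when $Bx=0$. We treat the three items in turn.

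\emph{Item 1.} Suppose $B^3=\chi B$ with $\chi\neq0$. Restrict to $W=\operatorname{im}B$. For $w=Bv\in W$ we get $B^2w=B^3v=\chi Bv=\chi w$. Hence $B^2=\chi\operatorname{id}$ on $W$, so $B|_W$ is invertible and $W\cap\ker B=0$. Since $\dim W=\dim\ker B=2$, this gives $V=W\oplus\ker B=W\oplus W^\perp$, so $W$ is nondegenerate.

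Now $B|_W$ is skew-adjoint on a nondegenerate plane, so it is a multiple $\mu J$ of the unique (up to sign) skew generator $J$ of $\mathfrak{so}(W)$. If $W$ is definite, then $J^2=-\operatorname{id}$; if $W$ is Lorentzian, then $J^2=+\operatorname{id}$ (a boost generator). Comparing with $B^2=\chi\operatorname{id}$ shows that $\chi<0$ forces the definite case, where $B|_W$ is a rotation, and $\chi>0$ forces the Lorentzian case, where $B|_W$ is a boost. The rescaling by $|\chi|^{-1/2}$ normalizes the block.

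\emph{Item 3.} This item is quick, so we handle it before item 2. If $B^2=0$, then $\operatorname{im}B\subset\ker B$, and equal dimensions give equality. Combined with $\ker B=(\operatorname{im}B)^\perp$, the plane $\operatorname{im}B$ equals its own orthogonal complement, so it is totally null.

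Non-conjugacy follows because $B^2$ is a conjugation invariant: it vanishes here but not for the blocks of item 2. For the example $B_0$, we verify directly that $GB_0$ is antisymmetric, where $G$ is the Gram matrix of $-2\,dU\,dV-dY^2+dZ^2$. We also check that $B_0^2=0$ and that $B_0$ has rank two.

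\emph{Item 2.} This is the main obstacle. Assume $B^3=0$ and $B^2\neq0$. The chain of kernels is $\ker B\subset\ker B^2\subset V$, and the images satisfy $\operatorname{im}B^2\subset\operatorname{im}B$ with $\operatorname{rank}B^2=1$. Since $B^3=0$, we have $\operatorname{im}B^2\subset\ker B=(\operatorname{im}B)^\perp$.

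Let $n$ span $\operatorname{im}B^2$. Then $n$ is null, because it lies in $\operatorname{im}B$ and also in $(\operatorname{im}B)^\perp$. Moreover $\ker B^2=(\operatorname{im}B^2)^\perp=n^\perp$, using that $B^2$ is self-adjoint. This gives the flag
\[
0\subset\langle n\rangle\subset\operatorname{im}B\subset n^\perp\subset V,
\]
where the inclusion $\operatorname{im}B\subset n^\perp$ holds because $B^2(\operatorname{im}B)=B^3V=0$.

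Inside $\operatorname{im}B=\langle n,m\rangle$, the Gram form is degenerate with radical $\langle n\rangle$, since $\langle n,m\rangle=0$. If also $\langle m,m\rangle=0$, then $\operatorname{im}B$ would be totally null. We rule this out: pick $v$ with $B^2v=n$ and set $m:=Bv$. Then
\[
\langle m,m\rangle=\langle Bv,Bv\rangle=-\langle v,B^2v\rangle=-\langle v,n\rangle,
\]
and this is nonzero because $v\notin\ker B^2=n^\perp$. So $m$ is spacelike or timelike, which gives the sign $\pm$.

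To reach the normal form, rescale $v$ so that $\langle m,m\rangle=\pm1$ and $\langle v,n\rangle=\mp1$. Adjust $v$ by adding multiples of $n$ and $m$ to make $v$ null and orthogonal to $m$; doing so changes $Bv$ only by elements of $\langle Bm,Bn\rangle=\langle n,0\rangle$. Complete with a unit vector $y$ orthogonal to $n,v,m$, chosen to have the causal character complementary to $m$ so that the signature is $(2,2)$. Then $y\perp\operatorname{im}B$, so $y\in\ker B$.

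In the basis $(n,v,m,y)$ the action is $Bn=0$, $Bv=m$, $Bm=n$, $By=0$. Matching this to the coordinates $U,V,Y,Z$, with $U\leftrightarrow v$, $V\leftrightarrow n$, and $m$ the $Z$ or $Y$ direction, reproduces $B^{(+)}_{\mathrm{par}}$ or $B^{(-)}_{\mathrm{par}}$, possibly after a sign rescaling of the orbit parameter. The delicate part is tracking these signs and the ordering of the null pair; we will verify them by direct substitution into the skew-adjointness condition $B^{T}G+GB=0$.
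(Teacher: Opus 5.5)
Your proposal is correct and follows the paper's proof essentially step for step. It uses $\ker B=(\operatorname{im}B)^\perp$, nondegeneracy of $W$ with $B^2=\chi\operatorname{id}$ on it and the sign of $\chi$ read off from the one-dimensional $\mathfrak{so}(W)$, then $n=B^2v$, $m=Bv$ with $\langle m,m\rangle=-\langle v,n\rangle\neq0$, a null correction of $v$ by a multiple of $n$ (no multiple of $m$ is needed, since $\langle v,Bv\rangle=0$ automatically), and completion by a vector of $\ker B$.

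The one loose end is the sign you flagged. When $m$ is timelike one has $\langle v,n\rangle=+1$, so your assignment $U\leftrightarrow v$, $V\leftrightarrow n$ does not reproduce the Gram matrix of $-2\,dU\,dV-dY^2+dZ^2$. Take $U\leftrightarrow v$, $V\leftrightarrow -n$, $Y\leftrightarrow m$ instead, as the paper does with the ordered basis $(\tilde x,-n,z,m)$. This yields $B^{(-)}_{\mathrm{par}}$ exactly, with no rescaling of the orbit parameter.
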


\begin{proof}
Skew-adjointness gives
$\ker B=(\operatorname{im}B)^\perp$.  Suppose first that $\chi\neq0$.
If $v\in W\cap W^\perp$, write $v=Bx$.  Then $Bv=0$, so
$B^2x=0$ and
$0=B^3x=\chi Bx=\chi v$; hence $v=0$.  Thus $W$ is nondegenerate and
$V=W\oplus W^\perp$.  Since $B^3=\chi B$ and $B|_W$ is onto, one has
$B^2=\chi\operatorname{id}$ on $W$.

Choose a pseudo-orthonormal basis $e_1,e_2$ of $W$, with
$\langle e_i,e_i\rangle=\delta_i\in\{\pm1\}$.  Skew-adjointness forces
\[
Be_1=\alpha e_2,
\qquad
Be_2=-\alpha\delta_1\delta_2 e_1,
\]
for some $\alpha\neq0$.  Consequently
$\chi=-\alpha^2\delta_1\delta_2$.  Hence $\chi<0$ precisely when $W$ is
definite, giving the elliptic rotation block, and $\chi>0$ precisely when
$W$ has signature $(1,1)$, giving the hyperbolic boost block.  The
endomorphism vanishes on $W^\perp$.

Now suppose $B^3=0$ and $B^2\neq0$.  Choose $x$ with
$n:=B^2x\neq0$ and put $z:=Bx$.  Then
$Bn=0$, $Bz=n$, and skew-adjointness gives
\[
\langle n,n\rangle=0,
\qquad
\langle z,n\rangle=0,
\qquad
\langle x,z\rangle=0.
\]
The plane $\operatorname{im}B=\operatorname{span}\{z,n\}$ cannot be
totally null: otherwise
$\operatorname{im}B=(\operatorname{im}B)^\perp=\ker B$, contradicting
$Bz=n\neq0$.  Thus $z$ is non-null.  After rescaling $x$, set
$\delta:=\langle z,z\rangle\in\{+1,-1\}$.  Skew-adjointness also gives
$\langle x,n\rangle=-\delta$.  Replacing $x$ by
\[
\widetilde x=x+\frac{\langle x,x\rangle}{2\delta}\,n
\]
preserves $B\widetilde x=z$ and makes $\widetilde x$ null.  Choose
$m\in\ker B$ independent of $n$, adjust it by a multiple of $n$ so that
$m\perp\widetilde x$, and normalize it.  The signature $(2,2)$ forces
$\langle m,m\rangle=-\delta$.  If $\delta=+1$, the ordered basis
$(\widetilde x,n,m,z)$ gives $B_{\mathrm{par}}^{(+)}$; if
$\delta=-1$, the ordered basis $(\widetilde x,-n,z,m)$ gives
$B_{\mathrm{par}}^{(-)}$.  The displayed null flag follows immediately.

Finally, if $B^2=0$, then
$\operatorname{im}B\subset\ker B$.  Both spaces have dimension two, so
they coincide.  Since
$\ker B=(\operatorname{im}B)^\perp$, this common plane equals its
orthogonal complement and is therefore totally null.  The matrix
\eqref{eq:square-zero-nilpotent} verifies that this case occurs; its
nilpotency index distinguishes it from item~2.
\end{proof}

\begin{theorem}[The conserved ambient generator and its orbit type]
\label{thm:rotational-first-integral}
Let $X:I\times J\to\AdS_3(L)$ be a local nondegenerate
cohomogeneity-one immersion with metric, principal frame, ambient frame
$\mathcal F$, and matrices $M,N$ as in
\eqref{eq:rot-metric-analytical}--\eqref{eq:profile-generator-matrix}.
Assume that its Gauss--Codazzi equations
\eqref{eq:rot-gauss-analytical}--\eqref{eq:rot-codazzi-analytical} hold.
Then
\begin{equation}
    \boxed{
    \mathfrak c
    =
    \eta\rho^2
    \left(
        \frac{1}{L^2}
        -\sigma q^2
        -\varepsilon k_2^2
    \right)
    }
    \label{eq:orbit-invariant}
\end{equation}
is constant.

Moreover, there exists a constant element
$\mathcal B\in\mathfrak{so}(2,2)$ such that
\begin{equation}
    X_t=\mathcal B X,
    \qquad
    X(s,t)=e^{t\mathcal B}X(s,0).
    \label{eq:rotational-reconstruction}
\end{equation}
The generator satisfies
\begin{equation}
    \mathcal B^3=\mathfrak c\,\mathcal B.
    \label{eq:minimal-polynomial-B}
\end{equation}
The generator has rank two, and its rank-two causal normal-form type is
\begin{equation}
    \begin{cases}
        \mathfrak c<0
        &\Longleftrightarrow
        \text{elliptic rotation on a definite two-plane},\\[2mm]
        \mathfrak c>0
        &\Longleftrightarrow
        \text{hyperbolic boost on a Lorentzian two-plane},\\[2mm]
        \mathfrak c=0
        &\Longleftrightarrow
        \text{index-three parabolic null rotation}.
    \end{cases}
    \label{eq:orbit-type-sign}
\end{equation}
\end{theorem}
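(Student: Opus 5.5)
The approach is to work with the ambient frame $\mathcal F=(E_0,e_1,e_2,\xi)$ as a $4\times4$ matrix of column vectors in $\mathbb R^{2,2}$. It satisfies $\mathcal F_t=\mathcal F M$ with $M$ from \eqref{eq:orbit-generator-matrix}, and a profile equation $\mathcal F_s=\mathcal F N$, where $N$ is the matrix recording $D_sE_0=e_1/L$, $D_se_1=\sigma L^{-1}E_0+\varepsilon\sigma k_1\xi$ (suitably signed), $D_se_2=0$, and $D_s\xi=-k_1e_1$. Since $M$ depends only on $s$, the natural candidate for the generator is the ambient endomorphism
\[
\mathcal B:=\mathcal F\,\widehat M\,\mathcal F^{-1},
\]
where $\widehat M=M/\rho$ after the $t$-rescaling is absorbed. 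Concretely, I would set $\mathcal B:=\mathcal F M\mathcal F^{-1}$. Then $\mathcal F_t=\mathcal B\mathcal F$ holds automatically, and in particular $X_t=L\,(E_0)_t=\mathcal B X$.

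The key step is constancy of $\mathcal B$. The $t$-derivative is
\[
\mathcal F(M_t+[M,M])\mathcal F^{-1}=0,
\]
since $M_t=0$. The $s$-derivative is
\[
\mathcal F\bigl(M_s+[N,M]\bigr)\mathcal F^{-1},
\]
so I must check $M_s=[M,N]$. This is exactly the zero-curvature (compatibility) condition $M_s-N_t=[M,N]$ with $N_t=0$. Entry by entry it reduces to $\rho'=q\rho$, the Gauss equation \eqref{eq:rot-gauss-analytical} and the Codazzi equation \eqref{eq:rot-codazzi-analytical}. I would verify this by direct multiplication; I expect it to be the main bookkeeping obstacle, because the signs $\sigma,\eta,\varepsilon=-\sigma\eta$ must be tracked carefully. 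Once it holds, $\mathcal B$ is constant, and $X(s,t)=e^{t\mathcal B}X(s,0)$ follows from uniqueness for the linear ODE in $t$. Skew-adjointness of $\mathcal B$ with respect to $\langle\cdot,\cdot\rangle_{2,2}$ follows from $GM$ being antisymmetric, where $G=\operatorname{diag}(-1,\sigma,\eta,\varepsilon)$ is the frame Gram matrix. Hence $\mathcal B\in\mathfrak{so}(2,2)$.

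For the algebraic statements, $\mathcal B$ is conjugate to $M$, so I would compute with $M$ itself. Its only nonzero column is the third, so its image lies in $\operatorname{span}\{$column $3\}+\operatorname{span}\{e_2\}$. More explicitly, $M$ maps $E_0,e_1,\xi$ to multiples of $e_2$, and maps $e_2$ to $v:=\rho(\eta L^{-1}E_0-\sigma\eta q e_1+\varepsilon\eta k_2\xi)$. Since $\rho\ne0$, the vectors $e_2$ and $v$ are independent, so the rank is exactly two.

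Next, $M^2 e_2=Mv=\rho^2(\eta L^{-2}-\sigma\eta q^2-\varepsilon\eta k_2^2)e_2=\mathfrak c\,e_2$, and for $w\in\{E_0,e_1,\xi\}$ one has $M^3w=\mathfrak c\,Mw$ by the same computation. This gives $M^3=\mathfrak cM$, hence $\mathcal B^3=\mathfrak c\mathcal B$. Because $\mathcal B$ is constant, its characteristic data are constant, so $\mathfrak c$ is constant; as a check, one can differentiate $\mathfrak c$ directly using the Gauss and Codazzi equations.

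Finally, I would apply Lemma~\ref{lem:rank-two-normal-forms}. If $\mathfrak c\neq0$, item 1 gives the elliptic case when $\mathfrak c<0$ and the hyperbolic case when $\mathfrak c>0$. If $\mathfrak c=0$, then $\mathcal B^3=0$, and I must show $\mathcal B^2\neq0$ in order to land in item 2 rather than item 3. For this, note $M^2E_0=(\rho/L)Mv\cdot(\ldots)$; more directly, $M^2E_0=(\rho/L)\,v\neq0$ because $v$ has a nonzero $E_0$-component $\rho\eta/L$. Thus $\mathcal B^2\neq0$, and $\mathcal B$ is an index-three parabolic null rotation. The converse implications in \eqref{eq:orbit-type-sign} follow because the three cases are mutually exclusive and exhaust the sign of $\mathfrak c$.
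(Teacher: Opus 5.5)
Your proposal is correct and follows essentially the same route as the paper: $\mathcal B=\mathcal F M\mathcal F^{-1}$ is constant by the zero-curvature condition $M'=[M,N]$, which does reduce to $\rho'=q\rho$, Gauss, and Codazzi. Antisymmetry of $GM$ gives $\mathcal B\in\mathfrak{so}(2,2)$, $M^3=\mathfrak c\,M$ with rank two, and when $\mathfrak c=0$ the nonzero $(E_0,E_0)$ entry $\eta\rho^2/L^2$ of $M^2$ excludes the square-zero type, so Lemma~\ref{lem:rank-two-normal-forms} applies.

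The only difference is that you deduce constancy of $\mathfrak c$ from $\mathcal B^3=\mathfrak c\,\mathcal B$ with $\mathcal B$ constant and nonzero, whereas the paper differentiates $\mathfrak c$ directly; both arguments are valid. Your phrase ``only nonzero column is the third'' is a slip, since columns $1,2,4$ of $M$ are multiples of $e_2$ with column $1$ nonzero, but your next sentence describes the image correctly.
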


\begin{proof}
Put
\[
S=L^{-2}-\sigma q^2-\varepsilon k_2^2.
\]
Since $\mathfrak c=\eta\rho^2S$ and $\rho'=q\rho$,
\begin{align}
\frac{\mathfrak c'}{2\eta\rho^2}
&=
qS-\sigma q q'-\varepsilon k_2k_2'\\
&=
q\left(L^{-2}-\sigma q^2-\varepsilon k_2^2\right)
-\sigma q\left[
\sigma\left(L^{-2}-\varepsilon k_1k_2\right)-q^2
\right]
-\varepsilon qk_2(k_1-k_2)\\
&=0.
\end{align}

For a more geometric proof, the profile-direction frame equation is
\begin{equation}
    \mathcal F_s=\mathcal F N,
    \qquad
    N=
    \begin{pmatrix}
        0 & \sigma/L & 0 & 0\\
        1/L & 0 & 0 & -k_1\\
        0 & 0 & 0 & 0\\
        0 & \varepsilon\sigma k_1 & 0 & 0
    \end{pmatrix}.
    \label{eq:profile-generator-matrix}
\end{equation}
The compatibility condition
$\mathcal F_{st}=\mathcal F_{ts}$ is
\begin{equation}
    M'+NM-MN=0.
    \label{eq:zero-curvature-MN}
\end{equation}
Define the ambient endomorphism
\begin{equation}
    \mathcal B
    =
    \mathcal F M\mathcal F^{-1}.
\end{equation}
Equation \eqref{eq:zero-curvature-MN} implies
\begin{equation}
    \partial_s\mathcal B=0,
    \qquad
    \partial_t\mathcal B=0.
\end{equation}
Thus $\mathcal B$ is constant.  Since the first column of
\eqref{eq:orbit-generator-matrix} gives
\begin{equation}
    \mathcal B E_0=\frac{\rho}{L}e_2,
\end{equation}
we obtain $X_t=\mathcal B X$ and hence
\eqref{eq:rotational-reconstruction}.

The matrix $M$ is skew-adjoint with respect to
$\operatorname{diag}(-1,\sigma,\eta,\varepsilon)$, and therefore its
ambient conjugate $\mathcal B$ belongs to $\mathfrak{so}(2,2)$.  A direct
multiplication of \eqref{eq:orbit-generator-matrix} yields
\begin{equation}
    M^3=\mathfrak c\,M.
\end{equation}
Conjugating by $\mathcal F$ gives
\eqref{eq:minimal-polynomial-B}.  The columns of $M$ show that its image is
spanned by $e_2$ and a vector whose $E_0$ component is
$\eta\rho/L\neq0$; hence $\operatorname{rank}M=2$.  When
$\mathfrak c\neq0$, the minimal polynomial is exactly
$x(x^2-\mathfrak c)$.  If $\mathfrak c=0$, the $(E_0,E_0)$ entry of
$M^2$ is $\eta\rho^2/L^2\neq0$, so the minimal polynomial is exactly
$x^3$.  In particular, the square-zero nilpotent type in
Lemma~\ref{lem:rank-two-normal-forms}(3) is excluded.  Items~1--2 of that
lemma now identify the invariant plane and prove the three normal-form
statements in \eqref{eq:orbit-type-sign}.  When $\mathfrak c<0$, the image plane is
definite and contains $e_2$, so it is positive definite for $\eta=+1$ and
negative definite for $\eta=-1$.  In the parabolic case, the non-null
direction in $\operatorname{im}\mathcal B$ likewise has sign $\eta$.
Thus $\eta$ records the causal subtype not determined by the sign of
$\mathfrak c$ alone.
\end{proof}

\medskip
\noindent\textbf{Geometric role of the invariant.}
The scalar $\mathfrak c$ is more than a first integral useful for solving the
ODEs.  It identifies the conjugacy type of the actual ambient Killing
generator.  Thus a sign computed from the profile data decides whether the
surface is swept out by rotations, boosts, or null rotations, and the
zero-curvature compatibility of the moving frame proves that this generator
is constant along the surface.

\begin{remark}
Theorem \ref{thm:rotational-first-integral} supplies the missing
extrinsic condition in the abstract cohomogeneity-one construction.
The warped metric and Gauss--Codazzi equations alone do not label the
orbit as elliptic, hyperbolic, or parabolic.  The sign of
$\mathfrak c$ gives this broad normal-form classification, while the orbit
sign $\eta$ records the remaining causal subtype in the definite and
parabolic cases.
\end{remark}

\subsection{The weighted equations in symmetry variables}

Let
\begin{equation}
    u=\lambda^2H.
\end{equation}
For a function depending only on $s$,
\begin{equation}
    \Delta_g u
    =
    \sigma(u''+qu').
\end{equation}
The conformal-biharmonic equations therefore reduce to
\begin{align}
    \sigma(u''+qu')
    +
    \left[
        -\varepsilon(k_1^2+k_2^2)
        -\frac{2}{L^2}
    \right]u
    &=
    0,
    \label{eq:rot-normal-CB-analytical}\\
    k_1u'
    +
    \varepsilon uH'
    &=
    0.
    \label{eq:rot-tangent-CB-analytical}
\end{align}
On an interval on which $u\neq0$ and $k_1\neq0$, define
\begin{equation}
    w=\frac{u'}{u}.
\end{equation}
Equation \eqref{eq:rot-tangent-CB-analytical} gives
\begin{equation}
    w
    =
    -\varepsilon\frac{H'}{k_1}.
    \label{eq:w-from-H}
\end{equation}
The normal equation becomes the Riccati equation
\begin{equation}
    \boxed{
    \sigma(w'+w^2+qw)
    -
    \varepsilon(k_1^2+k_2^2)
    -
    \frac{2}{L^2}
    =
    0.
    }
    \label{eq:rot-Riccati}
\end{equation}
Equations
\eqref{eq:orbit-invariant},
\eqref{eq:rot-codazzi-analytical},
\eqref{eq:w-from-H}, and
\eqref{eq:rot-Riccati}
give a purely analytic profile-curve formulation.

If $q\neq0$, Codazzi gives
\begin{equation}
    k_1
    =
    k_2+\frac{k_2'}{q}.
    \label{eq:k1-from-k2}
\end{equation}
The orbit invariant gives
\begin{equation}
    k_2^2
    =
    \varepsilon
    \left(
        \frac{1}{L^2}
        -\sigma q^2
        -\frac{\mathfrak c}{\eta\rho^2}
    \right).
    \label{eq:k2-from-orbit-invariant}
\end{equation}
Thus a choice of orbit type and one sign of $k_2$ reduces the problem
to an ordinary differential equation for the profile radius $\rho$.

\subsection{What elliptic, hyperbolic, and parabolic symmetry impose}

For a spacelike surface,
\begin{equation}
    \sigma=\eta=+1,
    \qquad
    \varepsilon=-1,
\end{equation}
and
\begin{equation}
    \mathfrak c
    =
    \rho^2
    \left(
        \frac{1}{L^2}-q^2+k_2^2
    \right).
\end{equation}
After writing $|\mathfrak c|=\omega^2$, the three cases are
\begin{align}
    \text{elliptic:}\qquad
    k_2^2
    &=
    q^2-\frac{1}{L^2}-\frac{\omega^2}{\rho^2},
    \label{eq:spacelike-elliptic-constraint}\\
    \text{hyperbolic:}\qquad
    k_2^2
    &=
    q^2-\frac{1}{L^2}+\frac{\omega^2}{\rho^2},
    \label{eq:spacelike-hyperbolic-constraint}\\
    \text{parabolic:}\qquad
    k_2^2
    &=
    q^2-\frac{1}{L^2}.
    \label{eq:spacelike-parabolic-constraint}
\end{align}

For a timelike surface with spacelike profile and timelike orbit,
\begin{equation}
    \sigma=+1,
    \qquad
    \eta=-1,
    \qquad
    \varepsilon=+1,
\end{equation}
and
\begin{equation}
    \mathfrak c
    =
    \rho^2
    \left(
        q^2+k_2^2-\frac{1}{L^2}
    \right).
\end{equation}
The corresponding constraints are
\begin{align}
    \text{elliptic:}\qquad
    k_2^2
    &=
    \frac{1}{L^2}-q^2-\frac{\omega^2}{\rho^2},
    \label{eq:timelike-elliptic-constraint}\\
    \text{hyperbolic:}\qquad
    k_2^2
    &=
    \frac{1}{L^2}-q^2+\frac{\omega^2}{\rho^2},
    \label{eq:timelike-hyperbolic-constraint}\\
    \text{parabolic:}\qquad
    k_2^2
    &=
    \frac{1}{L^2}-q^2.
    \label{eq:timelike-parabolic-constraint}
\end{align}

\subsection{The spacelike parabolic branch as a scalar third-order problem}

The generic spacelike parabolic branch is the most directly
tractable analytically.  We fix the profile orientation by working on an interval on which
$q>0$ and later restrict to the open branch $k_1\neq0$ when eliminating the
weighted mean curvature.  Assume
\begin{equation}
    \sigma=\eta=+1,
    \qquad
    \varepsilon=-1,
    \qquad
    \mathfrak c=0.
\end{equation}
Equation
\eqref{eq:spacelike-parabolic-constraint} can be parametrized by a
function $\theta$:
\begin{equation}
    q=\frac{\cosh\theta}{L},
    \qquad
    k_2=\frac{\sinh\theta}{L}.
    \label{eq:parabolic-rapidity}
\end{equation}
This choice fixes the $q>0$ sheet.  A solution on the $q<0$ sheet is carried
to this convention by reversing the profile coordinate
$\widetilde s=-s$ and setting
$\widetilde\theta(\widetilde s)=\theta(-\widetilde s)$; the odd profile
jets change sign.  Thus no local branch is lost by the orientation choice.
The Codazzi equation gives
\begin{equation}
    k_1
    =
    \theta'
    +
    \frac{\sinh\theta}{L}.
    \label{eq:parabolic-k1}
\end{equation}
Indeed,
\begin{equation}
    k_2'
    =
    \frac{\theta'\cosh\theta}{L}
    =
    q(k_1-k_2).
\end{equation}
The mean curvature is
\begin{equation}
    H
    =
    -\frac{\theta'}{2}
    -
    \frac{\sinh\theta}{L}.
    \label{eq:parabolic-H}
\end{equation}
The tangential equation gives
\begin{equation}
    w
    =
    \frac{H'}{k_1}.
    \label{eq:parabolic-w}
\end{equation}
Substitution into the normal equation produces the scalar third-order
equation
\begin{equation}
\boxed{
    \left(\frac{H'}{k_1}\right)'
    +
    \left(\frac{H'}{k_1}\right)^2
    +
    \frac{\cosh\theta}{L}\frac{H'}{k_1}
    +
    k_1^2
    +
    \frac{\sinh^2\theta}{L^2}
    -
    \frac{2}{L^2}
    =
    0,
}
\label{eq:parabolic-third-order-compact}
\end{equation}
where $k_1$ and $H$ are given by
\eqref{eq:parabolic-k1} and \eqref{eq:parabolic-H}.

Equivalently, wherever
\begin{equation}
    L\theta'+\sinh\theta\neq0,
\end{equation}
equation \eqref{eq:parabolic-third-order-compact} is
\begin{align}
    \theta'''
    =
    \frac{1}{
        2L^3\left(L\theta'+\sinh\theta\right)
    }
    \Big[
    &4L^4(\theta')^4
    +3L^4(\theta'')^2
    +12L^3(\theta')^3\sinh\theta
    \nonumber\\
    &+4L^3\theta'\theta''\cosh\theta
    +28L^2(\theta')^2\sinh^2\theta
    -4L^2(\theta')^2
    \nonumber\\
    &-3L^2\theta''\sinh(2\theta)
    +20L\theta'\sinh^3\theta
    -20L\theta'\sinh\theta
    \nonumber\\
    &+8\sinh^4\theta
    -8\sinh^2\theta
    \Big].
    \label{eq:parabolic-third-order-expanded}
\end{align}

Once $\theta$ is known, the remaining quantities are obtained by
quadrature:
\begin{align}
    \rho(s)
    &=
    \rho_0
    \exp
    \left[
        \frac{1}{L}
        \int_0^s\cosh\theta(\tau)\,d\tau
    \right],
    \label{eq:parabolic-rho-quadrature}\\
    u(s)
    &=
    u_0
    \exp
    \left[
        \int_0^s
        \frac{H'(\tau)}{k_1(\tau)}
        \,d\tau
    \right],
    \label{eq:parabolic-u-quadrature}\\
    \lambda(s)^2
    &=
    \frac{u(s)}{H(s)}.
    \label{eq:parabolic-lambda}
\end{align}

\begin{theorem}[Local spacelike parabolic family]
\label{thm:analytical-local-parabolic-family}
Fix initial data
\begin{equation}
    \theta(0)=\theta_0,
    \qquad
    \theta'(0)=a_0,
    \qquad
    \theta''(0)=b_0,
    \qquad
    \rho(0)=\rho_0>0.
\end{equation}
Define
\begin{align}
    k_{1,0}
    &=
    a_0+\frac{\sinh\theta_0}{L},
    \\
    H_0
    &=
    -\frac{a_0}{2}
    -\frac{\sinh\theta_0}{L},
    \\
    H'_0
    &=
    -\frac{b_0}{2}
    -\frac{a_0\cosh\theta_0}{L}.
\end{align}
Assume
\begin{equation}
    k_{1,0}\neq0,
    \qquad
    H_0\neq0,
    \qquad
    H'_0\neq0,
    \qquad
    H_0\neq k_{1,0}.
    \label{eq:parabolic-open-data}
\end{equation}
Choose $u_0$ so that
\begin{equation}
    \frac{u_0}{H_0}>0.
\end{equation}
Then the scalar initial-value problem has a unique local real-analytic
solution $\theta$, and the associated functions $\rho,u,\lambda$ are
uniquely determined by the stated initial data.  Moreover, there exists a
local real-analytic spacelike parabolic rotational immersion
\begin{equation}
    X:(-\delta,\delta)\times J
    \longrightarrow
    \operatorname{AdS}_3(L)
\end{equation}
which is a proper biharmonic conformal immersion.  The immersion is unique
up to an ambient isometry of $\operatorname{AdS}_3(L)$.  Both $H$ and
$\lambda$ are nonconstant.

The conditions \eqref{eq:parabolic-open-data} define an open subset of
the initial-data space.
\end{theorem}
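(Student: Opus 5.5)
The plan is to treat \eqref{eq:parabolic-third-order-expanded} as an analytic normal-form ODE and then feed its solution into the machinery already built in Sections~\ref{sec:cohomogeneity-one-existence} and~\ref{sec:analytical-rotational-reduction}.

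\textbf{Step 1: local analytic solution.} Rewrite \eqref{eq:parabolic-third-order-expanded} as the first-order system for $(\theta,\theta',\theta'')$. Its right-hand side is real analytic on the open set where $L\theta'+\sinh\theta\neq0$. This condition is exactly $Lk_1\neq0$, so at $s=0$ it holds by the assumption $k_{1,0}\neq0$. The analytic Cauchy--Kovalevskaya (ODE) theorem then gives a unique real-analytic $\theta$ on some $(-\delta,\delta)$. After shrinking $\delta$, continuity keeps $k_1$, $H$ and $H'$ away from zero. We then define $\rho$, $u$ and $\lambda^2$ by the quadratures \eqref{eq:parabolic-rho-quadrature}--\eqref{eq:parabolic-lambda}. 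Since $u$ has the sign of $u_0$ and $H$ keeps the sign of $H_0$, the condition $u_0/H_0>0$ gives $\lambda^2>0$. Uniqueness of $\rho,u,\lambda$ is immediate from uniqueness of $\theta$.

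\textbf{Step 2: verify the structure equations and build the immersion.} Set $q$, $k_2$, $k_1$ by \eqref{eq:parabolic-rapidity}--\eqref{eq:parabolic-k1}.
\begin{itemize}
\item Codazzi \eqref{eq:rot-codazzi-analytical} holds by the computation displayed after \eqref{eq:parabolic-k1}.
\item The Gauss equation \eqref{eq:rot-gauss-analytical} (with $\sigma=1$, $\varepsilon=-1$) reads $q'=L^{-2}+k_1k_2-q^2$. The left side is $\theta'\sinh\theta/L$. For the right side, $q^2-k_2^2=L^{-2}$ gives $L^{-2}-q^2=-k_2^2$, so the right side is $k_2(k_1-k_2)=\theta'\sinh\theta/L$. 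The two sides agree.
\end{itemize}
Hence the data $(g=ds^2+\rho^2dt^2,\ A=\operatorname{diag}(k_1,k_2))$ satisfy Gauss--Codazzi. The fundamental theorem of hypersurfaces, exactly as in the proof of Theorem~\ref{thm:cohom1-local-existence}, yields a spacelike immersion into $\AdS_3(L)$, unique up to ambient isometry.

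Along this solution the orbit invariant \eqref{eq:orbit-invariant} is $\mathfrak c=\rho^2(L^{-2}-q^2+k_2^2)=0$. Theorem~\ref{thm:rotational-first-integral} then gives a constant generator $\mathcal B$ of index-three parabolic type with $X=e^{t\mathcal B}X(s,0)$. So the immersion is a genuine parabolic rotational surface.

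\textbf{Step 3: biharmonicity.} By construction $w=H'/k_1$ satisfies the tangential relation \eqref{eq:w-from-H} with $\varepsilon=-1$, and $u'/u=w$. The tangential equation \eqref{eq:rot-tangent-CB-analytical} then reads $k_1u'-uH'=0$, which holds. Equation \eqref{eq:parabolic-third-order-compact} is precisely the Riccati equation \eqref{eq:rot-Riccati} after substituting $q$ and $k_2$. Multiplying it by $u\neq0$ recovers the normal equation \eqref{eq:rot-normal-CB-analytical}. Corollary~\ref{cor:spacelike-system} then shows that $\phi:(M,\lambda^{-2}g)\to\AdS_3(L)$ is biharmonic. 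It is proper because $\tau=2u\xi\neq0$.

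\textbf{Step 4: nonconstancy and openness.}
\begin{itemize}
\item $H'(0)=H'_0\neq0$ by assumption, so $H$ is nonconstant.
\item As in \eqref{eq:lambda-prime-general}, $(\lambda^2)'=u'(H-k_1)/H^2$, and $u'(0)=u_0H'_0/k_{1,0}\neq0$. With $H_0\neq k_{1,0}$ this makes $(\lambda^2)'(0)\neq0$, so $\lambda$ is nonconstant.
\item The conditions \eqref{eq:parabolic-open-data} are finitely many strict inequalities in continuous functions of $(\theta_0,a_0,b_0)$, together with $\rho_0>0$ and $u_0/H_0>0$. They therefore define an open set.
\end{itemize}

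\textbf{Expected main obstacle.} The hardest step is confirming that \eqref{eq:parabolic-third-order-compact} really is equivalent to the expanded normal form \eqref{eq:parabolic-third-order-expanded}, with leading coefficient exactly $2L^3(L\theta'+\sinh\theta)$. This amounts to checking that $(H'/k_1)'$ contains $\theta'''$ linearly, with coefficient $-1/(2k_1)$, and no other third derivatives. I would do it by writing $H'=-\theta''/2-\theta'\cosh\theta/L$ and clearing the denominators $k_1^2$ symbolically. Only the nonvanishing of the leading coefficient matters for the argument; the other coefficients are a routine algebraic check.
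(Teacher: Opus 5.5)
Your proposal is correct and follows the same route as the paper's proof. Both arguments use analytic ODE theory for the third-order equation (valid since $Lk_{1,0}=L\theta'(0)+\sinh\theta_0\neq0$), quadratures for $\rho,u,\lambda$, Gauss--Codazzi with the fundamental theorem of hypersurfaces, $\mathfrak c=0$ together with Theorem~\ref{thm:rotational-first-integral} to identify the parabolic generator, and the derivative formula for $\lambda^2$ plus strict inequalities for nonconstancy and openness. Your explicit check of the Gauss equation via $q^2-k_2^2=L^{-2}$ and your formula $u'(0)=u_0H'_0/k_{1,0}$ fill in steps the paper leaves as ``by construction.''
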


\begin{proof}
Equation \eqref{eq:parabolic-third-order-expanded} is an analytic
ordinary differential equation in a neighborhood of the initial data
because $k_{1,0}\neq0$.  Standard analytic ODE theory therefore gives
a unique local analytic solution $\theta$.

Equations
\eqref{eq:parabolic-rho-quadrature} and
\eqref{eq:parabolic-u-quadrature}
then define positive $\rho$ and nonzero $u$.  The quantities
$q,k_1,k_2,H$ satisfy Gauss, Codazzi, and both
conformal-biharmonic equations by construction.  The fundamental theorem
of hypersurfaces in a semi-Riemannian space form first gives a local
spacelike immersion, unique up to ambient isometry.  Its rotational first
integral is $\mathfrak c=0$; Theorem
\ref{thm:rotational-first-integral} then shows that the immersion is
generated by a constant parabolic element of $\mathfrak{so}(2,2)$.

Since $H'_0\neq0$, the mean curvature is nonconstant.  Moreover,
\begin{equation}
    \frac{(\lambda^2)'}{\lambda^2}
    =
    \frac{u'}{u}-\frac{H'}{H}
    =
    H'
    \left(
        \frac{1}{k_1}-\frac{1}{H}
    \right).
    \label{eq:lambda-log-derivative-parabolic}
\end{equation}
The assumptions $H'_0\neq0$ and $H_0\neq k_{1,0}$ imply
\begin{equation}
    (\lambda^2)'(0)\neq0.
\end{equation}
After shrinking $\delta$ if necessary,
$u/H>0$ throughout the interval.  Hence $\lambda$ is positive and
nonconstant.  Finally, $H_0\neq0$ implies that the immersion is
nonharmonic and therefore proper biharmonic.
\end{proof}

\begin{remark}[Three profile parameters on the generic parabolic branch]
\label{rem:spacelike-parabolic-parameter-count}
For fixed $L$ and a fixed base point $s=0$, the scalar initial-value problem
is determined by
\[
(\theta_0,a_0,b_0)
=
\bigl(\theta(0),\theta'(0),\theta''(0)\bigr),
\]
together with the positive orbit scale $\rho_0$ and the nonzero weight
scale $u_0$.  As in Proposition~\ref{prop:cohom1-parameter-count},
$\rho_0$ is removed by positive rescaling of the orbit coordinate and
$u_0$ by constant homothety of the source metric.  Thus, modulo these
gauges and ambient isometry, the generic pointed spacelike parabolic germs
form a three-dimensional continuous parameter family on the free generic
stratum.  Profile, orbit, and normal reversals give residual discrete
identifications, and exceptional data may have additional stabilizers; no
global smooth-moduli-space assertion is intended.
\end{remark}

\begin{example}[Concrete analytic initial data]
\label{ex:concrete-parabolic-data}
Set $L=1$ and choose
\begin{equation}
    \theta_0=0,
    \qquad
    a_0=1,
    \qquad
    b_0=0,
    \qquad
    \rho_0=1,
    \qquad
    u_0=-1.
\end{equation}
Then
\begin{equation}
    q_0=1,
    \qquad
    k_{2,0}=0,
    \qquad
    k_{1,0}=1,
    \qquad
    H_0=-\frac12,
    \qquad
    H'_0=-1.
\end{equation}
Furthermore,
\begin{equation}
    \lambda_0^2=\frac{u_0}{H_0}=2,
\end{equation}
and
\begin{equation}
    (\lambda^2)'(0)=-6.
\end{equation}
Thus these data determine a rigorous local spacelike parabolic
rotational example with nonconstant mean curvature and nonconstant
dilation.
\end{example}

\subsection{Reconstructing the spacelike surface in null coordinates}
\label{subsec:canonical-parabolic-immersion}

The preceding existence theorem can be realized by an explicit parabolic
parametrization.  Introduce null coordinates $(U,V,Y,Z)$ on
$\mathbb R^{2,2}$ with
\begin{equation}
    \langle dX,dX\rangle_{2,2}
    =
    -2\,dU\,dV-dY^2+dZ^2.
    \label{eq:null-ambient-metric}
\end{equation}
The maps
\begin{equation}
    \mathcal P_t(U,V,Y,Z)
    =
    \left(
        U,\,
        V+tZ+\frac12t^2U,\,
        Y,\,
        Z+tU
    \right)
    \label{eq:parabolic-action}
\end{equation}
form a one-parameter subgroup of $O(2,2)$.  Its infinitesimal generator
$\mathcal B_{\mathrm{par}}$ satisfies
\begin{equation}
    \mathcal B_{\mathrm{par}}^3=0,
    \qquad
    \mathcal B_{\mathrm{par}}^2\neq0.
\end{equation}

Let $\theta$ solve \eqref{eq:parabolic-third-order-compact}.  Define
$\rho$ and $Y$ by
\begin{align}
    \rho'
    &=
    \frac{\cosh\theta}{L}\rho,
    \qquad
    \rho(0)=\rho_0>0,
    \label{eq:canonical-rho-equation}\\
    Y'
    &=
    \frac{\cosh\theta}{L}Y+\sinh\theta,
    \qquad
    Y(0)=Y_0,
    \label{eq:canonical-Y-equation}
\end{align}
and set
\begin{equation}
    V=\frac{L^2-Y^2}{2\rho}.
    \label{eq:canonical-V}
\end{equation}
Then
\begin{equation}
    \boxed{
    X(s,t)
    =
    \mathcal P_t\bigl(\rho(s),V(s),Y(s),0\bigr)
    =
    \left(
        \rho,\,
        V+\frac12t^2\rho,\,
        Y,\,
        t\rho
    \right)
    }
    \label{eq:canonical-parabolic-immersion}
\end{equation}
takes values in $\AdS_3(L)$.

\begin{proposition}[Explicit spacelike reconstruction]
\label{prop:explicit-parabolic-reconstruction}
The immersion \eqref{eq:canonical-parabolic-immersion} is spacelike and has
induced metric
\begin{equation}
    g=ds^2+\rho(s)^2dt^2.
    \label{eq:canonical-parabolic-metric}
\end{equation}
With a suitable timelike unit normal, its principal curvatures are
\begin{equation}
    k_1
    =
    \theta'+\frac{\sinh\theta}{L},
    \qquad
    k_2
    =
    \frac{\sinh\theta}{L}.
    \label{eq:canonical-parabolic-curvatures}
\end{equation}
Consequently, whenever $\theta$, $u$, and $\lambda$ are obtained from
Theorem~\ref{thm:analytical-local-parabolic-family}, the map
\begin{equation}
    X:
    \left(I\times J,\lambda^{-2}
    \left(ds^2+\rho^2dt^2\right)\right)
    \longrightarrow\AdS_3(L)
\end{equation}
is a proper biharmonic conformal immersion with parabolic rotational
symmetry.
\end{proposition}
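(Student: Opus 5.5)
The plan is to verify the claims directly from the explicit formula \eqref{eq:canonical-parabolic-immersion}, using only \eqref{eq:canonical-rho-equation}--\eqref{eq:canonical-V} and then the already established reduction. We split the argument into four steps.

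\emph{Step 1: the image lies in $\AdS_3(L)$.} With $X=(\rho,V+\tfrac12t^2\rho,Y,t\rho)$ and metric \eqref{eq:null-ambient-metric},
\[
\langle X,X\rangle=-2\rho\bigl(V+\tfrac12t^2\rho\bigr)-Y^2+t^2\rho^2=-2\rho V-Y^2=-L^2,
\]
by \eqref{eq:canonical-V}.

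\emph{Step 2: the induced metric.} The orbit derivative is $X_t=(0,t\rho,0,\rho)$, so $\langle X_t,X_t\rangle=\rho^2$. Next, $X_s=(\rho',V'+\tfrac12t^2\rho',Y',t\rho')$. Hence
\[
\langle X_s,X_t\rangle=-\rho'\,t\rho+t\rho'\rho=0
\]
and
\[
\langle X_s,X_s\rangle=-2\rho'V'-(Y')^2.
\]
Differentiating $2\rho V=L^2-Y^2$ gives $\rho V'=-\rho'V-YY'$. Writing $q=\cosh\theta/L$ and using $\rho'=q\rho$ and $Y'=qY+\sinh\theta$, a short algebraic check yields
\[
-2\rho'V'-(Y')^2=q^2(L^2-Y^2)+2qYY'-(Y')^2=q^2L^2-\sinh^2\theta=1.
\]
So $g=ds^2+\rho^2dt^2$, and the surface is spacelike.

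\emph{Step 3: the principal curvatures.} We avoid computing second derivatives blindly. The frame $(E_0,e_1,e_2)=(X/L,X_s,X_t/\rho)$ is pseudo-orthonormal, so there is a unique timelike unit normal $\xi$ up to sign. We compute the coefficients $\langle X_{ss},\xi\rangle$, $\langle X_{st},\xi\rangle$, $\langle X_{tt},\xi\rangle$. Since $X_{st}=(\rho'/\rho)X_t$ is tangent, $e_1,e_2$ are principal directions.

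For the orbit curvature, $X_{tt}=(0,\rho,0,0)$. Its normal component gives $k_2$ via $\varepsilon\langle X_{tt},\xi\rangle$ divided by $\rho^2$. A cleaner route is to invoke the identity $\mathfrak c=0$. We have $\mathcal B_{\mathrm{par}}^3=0$ and $X_t=\mathcal B_{\mathrm{par}}X$ with $\mathcal B_{\mathrm{par}}^2\neq0$. Comparing with Theorem~\ref{thm:rotational-first-integral}, where $M^3=\mathfrak cM$ and $M$ represents the same generator in the moving frame, forces $\mathfrak c=0$. Then \eqref{eq:spacelike-parabolic-constraint} gives $k_2^2=q^2-L^{-2}=\sinh^2\theta/L^2$, and the choice of normal sign fixes $k_2=\sinh\theta/L$. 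Here we should double-check that the constraint was derived from the Gauss--Codazzi data of an actual immersion, which it was.

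For the profile curvature, Codazzi \eqref{eq:rot-codazzi-analytical} holds for any immersion. Since $q=\cosh\theta/L\neq0$, \eqref{eq:k1-from-k2} yields $k_1=k_2+k_2'/q=\theta'+\sinh\theta/L$. This uses only that $e_1$ is principal, which was shown above.

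\emph{Step 4: conclusion.} The induced metric and shape operator now coincide with the data of Theorem~\ref{thm:analytical-local-parabolic-family}. The weighted equations \eqref{eq:spacelike-normal}--\eqref{eq:spacelike-tangential} hold because $\theta$ solves \eqref{eq:parabolic-third-order-compact} and $u$ is defined by \eqref{eq:parabolic-u-quadrature}. Corollary~\ref{cor:spacelike-system} therefore gives biharmonicity for $\lambda^{-2}g$. Properness follows from $\tau=2u\xi\neq0$, and the symmetry is parabolic by \eqref{eq:parabolic-action}.

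The main obstacle is the sign bookkeeping in Step 3. Deducing $\mathfrak c=0$ from the conjugacy with $\mathcal B_{\mathrm{par}}$ needs care, because the $M$ of the moving frame must be identified with $\mathcal B_{\mathrm{par}}$ through $X_t=\mathcal BX$ on an open set. If that is delicate, the fallback is to write $\xi$ explicitly: it lies in the orthogonal complement of $X,X_t,X_s$ and can be solved from $(\rho,V,Y,\rho')$. Then $k_2=-\langle X_{tt},\xi\rangle/\rho^2$ can be computed directly.
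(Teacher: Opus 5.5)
Your Steps 1, 2 and 4 match the paper's proof. Step 3, however, reaches the principal curvatures by a genuinely different route.

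\emph{The paper's route.} It writes down the explicit timelike normal $\xi_0$ along $t=0$ and transports it by $\mathcal P_t$. It then evaluates $\langle X_{st},\xi\rangle=0$, $\langle X_{tt},\xi\rangle=\rho^2\sinh\theta/L$ and $\langle X_{ss},\xi\rangle=\theta'+\sinh\theta/L$ directly. This is self-contained, fixes the signs of both curvatures at once, and does not depend on Theorem~\ref{thm:rotational-first-integral}.

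\emph{Your route.} You get principality from $X_{st}=qX_t$, obtain $k_2^2$ from $\mathfrak c=0$, and then recover $k_1$ from Codazzi, which is legitimate since $q=\cosh\theta/L>0$. You therefore never need $\xi$ or $X_{ss}$ explicitly. The price is reliance on the moving-frame machinery, plus two points you should make explicit.

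First, the identification with $\mathcal B_{\mathrm{par}}$ is cleanest as follows. The entries $\rho,q,k_2$ of $M$ depend only on $s$, by $\mathcal P_t$-equivariance of the frame. Hence $\mathcal F_{ttt}=\mathcal F M^3=\mathfrak c\,\mathcal F M$, which gives $X_{ttt}=\mathfrak c\,X_t$. The explicit formula gives $X_{ttt}=0$, so $\mathfrak c=0$ pointwise.

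Second, $k_2^2=\sinh^2\theta/L^2$ gives only $k_2=\pm\sinh\theta/L$ pointwise. A global flip of $\xi$ fixes the sign only on intervals where $\sinh\theta\neq0$. The paper's own Example~\ref{ex:concrete-parabolic-data} has $\theta_0=0$, so you need an argument here. For instance, $X$ and hence $k_2$ are real-analytic, so one of the analytic factors $k_2\mp\sinh\theta/L$ must vanish identically.

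Finally, a harmless sign slip. In the paper's conventions $\langle X_{tt},\xi\rangle=g(A\partial_t,\partial_t)=\rho^2k_2$, with no factor $\varepsilon$. Your aside and your fallback formula $k_2=-\langle X_{tt},\xi\rangle/\rho^2$ therefore carry the opposite sign. This does not matter, because the normal is only required to be ``suitable'' and you derive $k_1$ from $k_2$ consistently.
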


\begin{proof}
Equation \eqref{eq:canonical-V} gives
\begin{equation}
    -2\rho V-Y^2=-L^2,
\end{equation}
and \eqref{eq:parabolic-action} preserves
\eqref{eq:null-ambient-metric}; hence $X(s,t)\in\AdS_3(L)$.
At $t=0$,
\begin{equation}
    X_t=(0,0,0,\rho),
\end{equation}
so $\langle X_t,X_t\rangle=\rho^2$.  Moreover,
$\langle X_s,X_t\rangle=0$.  Differentiating
\eqref{eq:canonical-V} gives
\begin{equation}
    \langle X_s,X_s\rangle
    =
    L^2q^2-(Y'-qY)^2,
    \qquad
    q=\frac{\rho'}{\rho}.
\end{equation}
Equations \eqref{eq:canonical-rho-equation} and
\eqref{eq:canonical-Y-equation} therefore imply
\begin{equation}
    \langle X_s,X_s\rangle
    =
    \cosh^2\theta-\sinh^2\theta
    =
    1.
\end{equation}
This proves \eqref{eq:canonical-parabolic-metric}.

For completeness, along $t=0$ one may choose
\begin{equation}
\begin{split}
    \xi_0
    =
    \Bigg(
    &-\frac{\rho\sinh\theta}{L},\\
    &\frac{
        L\bigl(L\sinh\theta+2Y\cosh\theta\bigr)
        +Y^2\sinh\theta
    }{2L\rho},\\
    &-\cosh\theta-\frac{Y\sinh\theta}{L},
    \,0
    \Bigg).
\end{split}
\label{eq:canonical-parabolic-normal}
\end{equation}
A direct calculation gives
\begin{equation}
    \langle\xi_0,\xi_0\rangle=-1,
    \qquad
    \langle\xi_0,X\rangle
    =
    \langle\xi_0,X_s\rangle
    =
    \langle\xi_0,X_t\rangle
    =
    0.
\end{equation}
Extending the normal by
\begin{equation}
    \xi(s,t)=\mathcal P_t\xi_0(s)
\end{equation}
and evaluating the second fundamental form yields
\begin{equation}
    \langle X_{st},\xi\rangle=0,
    \qquad
    \langle X_{tt},\xi\rangle=\rho^2\frac{\sinh\theta}{L},
    \qquad
    \langle X_{ss},\xi\rangle
    =
    \theta'+\frac{\sinh\theta}{L}.
\end{equation}
Thus the coordinate directions are principal and
\eqref{eq:canonical-parabolic-curvatures} follows.  The final assertion is
then exactly Theorem~\ref{thm:analytical-local-parabolic-family}.
\end{proof}

The auxiliary profile function in \eqref{eq:canonical-Y-equation} is also
given explicitly by
\begin{equation}
    Y(s)
    =
    \rho(s)
    \left[
        \frac{Y_0}{\rho_0}
        +
        \int_0^s
        \frac{\sinh\theta(\tau)}{\rho(\tau)}
        \,d\tau
    \right].
    \label{eq:canonical-Y-quadrature}
\end{equation}
Thus the entire parabolic immersion, including its conformal dilation, is
recovered from the scalar function $\theta$ by quadratures.

\subsection{The rigid umbilical branch as a control solution}

There is also a closed-form constant-dilation solution that provides a
useful analytic check.  Let $\theta=\theta_\ast$ be constant.  Then
\begin{equation}
    k_1=k_2=\frac{\sinh\theta_\ast}{L},
    \qquad
    q=\frac{\cosh\theta_\ast}{L}.
\end{equation}
The conformal-biharmonic equations with $H\neq0$ require
\begin{equation}
    \sinh^2\theta_\ast=1.
\end{equation}
Hence
\begin{equation}
    k_1=k_2=\pm\frac1L,
    \qquad
    q=\frac{\sqrt2}{L},
\end{equation}
and
\begin{equation}
    \rho(s)
    =
    \rho_0e^{\sqrt2\,s/L}.
\end{equation}
The tangential equation forces $u$ to be constant, so
$\lambda^2=u/H$ is constant.  This is the parabolic-coordinate
realization of the totally umbilical proper biharmonic control branch;
it is consistent with the spacelike CMC rigidity theorem.

For a direct geometric visualization of the reconstruction, set $Y_0=0$
and consider the meridian generating curve
\begin{equation}
    \gamma(s)
    =
    X(s,0)
    =
    \left(
        \rho(s),
        \frac{L^2-Y(s)^2}{2\rho(s)},
        Y(s),
        0
    \right).
    \label{eq:meridian-generating-curve}
\end{equation}
Figure~\ref{fig:parabolic-generating-curves} compares the non-CMC curve from
Example~\ref{ex:concrete-parabolic-data} with the closed-form totally
umbilical control branch.  The two curves use $L=1$, $\rho_0=1$, and
$Y_0=0$, and therefore pass through the same point
$\gamma(0)=(1,1/2,0,0)$.  Since $U=\rho>0$ and
$V=(1-Y^2)/(2U)$ on this section, the pair $(U,Y)$ determines the complete
curve $\gamma$; the figure is therefore a coordinate representation of the
actual generating curves rather than a plot of scalar diagnostics.  The
displayed interval is chosen only for visualization and the figure is not
used in any proof.

\begin{figure}[H]
\centering
\includegraphics[width=0.76\textwidth]{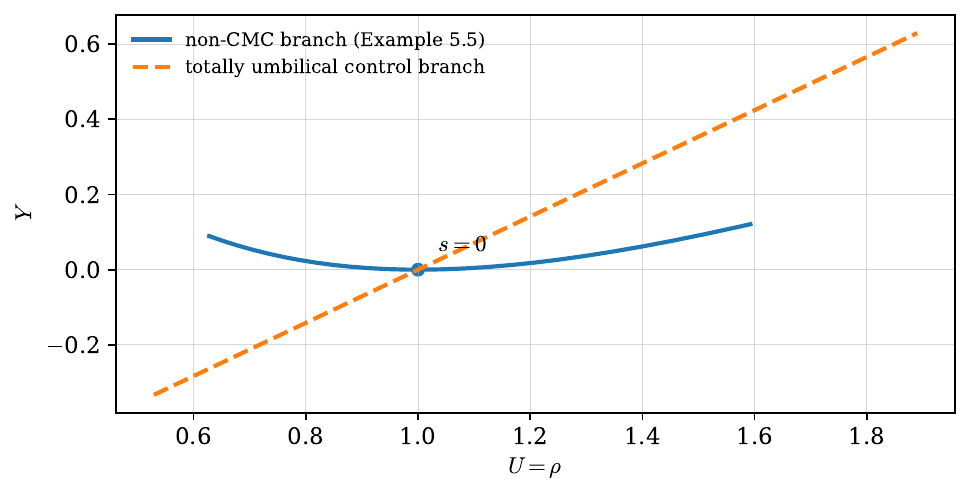}
\caption{Meridian generating curves
$\gamma(s)=X(s,0)=(U,V,Y,0)$ in the $Z=0$ null-coordinate section, represented
in the $(U,Y)$ chart for $-0.45\le s\le0.45$.  The solid curve is the
non-CMC branch of Example~\ref{ex:concrete-parabolic-data}; the dashed curve
is the totally umbilical constant-dilation control branch with
$\sinh\theta_\ast=1$.  In the region $U>0$, the omitted coordinate is
recovered uniquely from $V=(1-Y^2)/(2U)$, and both curves satisfy
$-2UV-Y^2=-1$.}
\label{fig:parabolic-generating-curves}
\end{figure}

\subsection{The real-principal timelike parabolic counterpart}
\label{subsec:timelike-parabolic}

We now complete the local parabolic analysis for a timelike surface with
spacelike profile and timelike orbit.  Throughout this subsection we restrict
to the real-principal branch for which the shape operator is diagonal in the
chosen pseudo-orthonormal frame.  The complex-principal and Jordan/null-
principal types, together with the opposite causal allocation of profile and
orbit directions, are not treated here.  Thus
\begin{equation}
    \sigma=+1,
    \qquad
    \eta=-1,
    \qquad
    \varepsilon=+1.
\end{equation}
The parabolic constraint is
\begin{equation}
    q^2+k_2^2=\frac1{L^2}.
    \label{eq:timelike-parabolic-constraint-zero}
\end{equation}
On an interval on which this parametrization is valid, write
\begin{equation}
    q=\frac{\cos\theta}{L},
    \qquad
    k_2=\frac{\sin\theta}{L}.
    \label{eq:timelike-parabolic-angle}
\end{equation}
Then the Gauss--Codazzi equations give
\begin{equation}
    k_1
    =
    \theta'
    +
    \frac{\sin\theta}{L}.
    \label{eq:timelike-parabolic-k1}
\end{equation}
Indeed, Codazzi gives this identity wherever $\cos\theta\neq0$, while
Gauss gives it wherever $\sin\theta\neq0$; by continuity the identity
holds throughout the interval.  Moreover,
\begin{equation}
    H
    =
    \frac{\theta'}{2}
    +
    \frac{\sin\theta}{L}.
    \label{eq:timelike-parabolic-H}
\end{equation}
On a branch with $u\neq0$ and $k_1\neq0$, the tangential equation yields
\begin{equation}
    w:=\frac{u'}{u}
    =
    -\frac{H'}{k_1},
    \label{eq:timelike-parabolic-w}
\end{equation}
while the normal equation is
\begin{equation}
    \boxed{
    w'+w^2+\frac{\cos\theta}{L}w
    -
    k_1^2
    -
    \frac{\sin^2\theta}{L^2}
    -
    \frac{2}{L^2}
    =
    0.
    }
    \label{eq:timelike-parabolic-scalar}
\end{equation}
Equations
\eqref{eq:timelike-parabolic-k1}--\eqref{eq:timelike-parabolic-scalar}
form a third-order scalar equation for $\theta$.  More explicitly,
\[
H'
=
\frac{\theta''}{2}
+
\frac{\theta'\cos\theta}{L},
\]
so the coefficient of $\theta'''$ in $w'$ is
$-1/(2k_1)$.  Hence, wherever $k_1\neq0$,
\eqref{eq:timelike-parabolic-scalar} can be solved uniquely in the analytic
normal form
\begin{equation}
    \theta'''
    =
    \mathcal F_{\mathrm{tim}}
    (\theta,\theta',\theta''),
    \label{eq:timelike-third-order-normal-form}
\end{equation}
for a real-analytic function $\mathcal F_{\mathrm{tim}}$ on the open set
$L\theta'+\sin\theta\neq0$.

Once $\theta$ is known, set
\begin{align}
    \rho(s)
    &=
    \rho_0
    \exp\left[
        \frac1L\int_0^s\cos\theta(\tau)\,d\tau
    \right],
    \label{eq:timelike-rho-quadrature}\\
    u(s)
    &=
    u_0
    \exp\left[
        -\int_0^s\frac{H'(\tau)}{k_1(\tau)}\,d\tau
    \right],
    \label{eq:timelike-u-quadrature}\\
    \lambda(s)^2
    &=
    \frac{u(s)}{H(s)}.
    \label{eq:timelike-lambda-quadrature}
\end{align}

\begin{theorem}[Local real-principal timelike parabolic family]
\label{thm:timelike-local-parabolic-family}
Fix
\begin{equation}
    \theta(0)=\theta_0,
    \qquad
    \theta'(0)=a_0,
    \qquad
    \theta''(0)=b_0,
    \qquad
    \rho(0)=\rho_0>0.
\end{equation}
Define
\begin{align}
    k_{1,0}
    &=
    a_0+\frac{\sin\theta_0}{L},
    \\
    H_0
    &=
    \frac{a_0}{2}+\frac{\sin\theta_0}{L},
    \\
    H'_0
    &=
    \frac{b_0}{2}
    +
    \frac{a_0\cos\theta_0}{L}.
\end{align}
Assume
\begin{equation}
    k_{1,0}\neq0,
    \qquad
    H_0\neq0,
    \qquad
    H'_0\neq0,
    \qquad
    H_0\neq-k_{1,0}.
    \label{eq:timelike-parabolic-open-data}
\end{equation}
Choose $u_0$ so that $u_0/H_0>0$.  Then
\eqref{eq:timelike-parabolic-scalar} has a unique local real-analytic
solution, and the quadratures
\eqref{eq:timelike-rho-quadrature}--\eqref{eq:timelike-lambda-quadrature}
determine a local real-analytic timelike parabolic rotational immersion
\begin{equation}
    X:(-\delta,\delta)\times J\longrightarrow\AdS_3(L)
\end{equation}
such that
\begin{equation}
    X:
    \left(
        (-\delta,\delta)\times J,\,
        \lambda^{-2}
        \left(ds^2-\rho^2dt^2\right)
    \right)
    \longrightarrow\AdS_3(L)
\end{equation}
is proper biharmonic.  Both $H$ and $\lambda$ are nonconstant.  The
conditions \eqref{eq:timelike-parabolic-open-data} define an open subset of
the initial-data space.
\end{theorem}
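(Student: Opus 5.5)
The proof will follow Theorem~\ref{thm:analytical-local-parabolic-family} step by step, with the sign changes appropriate to $\sigma=+1$, $\eta=-1$, $\varepsilon=+1$.

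\emph{Step 1: the scalar equation is a regular analytic ODE.} First I check that \eqref{eq:timelike-parabolic-scalar} is a nondegenerate third-order ODE near the initial data. Substituting \eqref{eq:timelike-parabolic-w} gives
\[
w'=-\frac{H''}{k_1}+\frac{H'k_1'}{k_1^2},
\qquad
H''=\tfrac12\theta'''+(\text{terms in }\theta,\theta',\theta'').
\]
So $\theta'''$ enters linearly, with coefficient $-1/(2k_1)$. All other terms are analytic in $(\theta,\theta',\theta'')$ wherever $k_1=\theta'+\sin\theta/L\neq0$. Since $k_{1,0}\neq0$, the normal form \eqref{eq:timelike-third-order-normal-form} is analytic near $(\theta_0,a_0,b_0)$, and Cauchy--Kovalevskaya (analytic Picard) gives a unique local analytic $\theta$.

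\emph{Step 2: recover the surface data.} Define $q,k_2$ by \eqref{eq:timelike-parabolic-angle}, $k_1$ by \eqref{eq:timelike-parabolic-k1}, and $\rho,u,\lambda^2$ by the quadratures. Shrinking $\delta$ keeps $\rho>0$, $u\neq0$, $k_1\neq0$, $H\neq0$ and $u/H>0$, the last because $u_0/H_0>0$.

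I then verify the timelike Gauss equation \eqref{eq:rot-gauss-analytical}, $q'=1/L^2-k_1k_2-q^2$, directly:
\[
q'=-\frac{\theta'\sin\theta}{L},
\qquad
\frac1{L^2}-q^2=\frac{\sin^2\theta}{L^2},
\qquad
k_1k_2=\frac{\theta'\sin\theta}{L}+\frac{\sin^2\theta}{L^2}.
\]
These agree. Codazzi \eqref{eq:rot-codazzi-analytical} follows from $k_2'=\theta'\cos\theta/L=q(k_1-k_2)$.

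Next, $H=\tfrac12(k_1+k_2)$ matches \eqref{eq:rot-H-analytical} with $\varepsilon=+1$. The definition of $u$ gives $u'/u=-H'/k_1$, which is \eqref{eq:rot-tangent-CB-analytical}. Finally, \eqref{eq:timelike-parabolic-scalar} is \eqref{eq:rot-Riccati} with $\sigma=\varepsilon=1$, which multiplied by $u$ is \eqref{eq:rot-normal-CB-analytical}.

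\emph{Step 3: build the immersion and identify the biharmonic map.} The metric $ds^2-\rho^2dt^2$ and the diagonal self-adjoint operator $A$ satisfy Gauss--Codazzi. The fundamental theorem of hypersurfaces in a semi-Riemannian space form \cite[Chapter~7]{ONeill1983} then gives a local timelike immersion with spacelike unit normal, unique up to ambient isometry. This is the step I expect to need the most care: the shape operator must be self-adjoint for a Lorentzian metric, which holds here because the frame is pseudo-orthonormal and the operator is diagonal in it (the real-principal branch). I also need the Gauss equation sign convention $K=c+\varepsilon\det A$ to match \eqref{eq:rot-gauss-analytical}; I will check this against the warped curvature $K=-\sigma\eta(q'+q^2)$.

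By construction the orbit invariant \eqref{eq:orbit-invariant} equals $-\rho^2(1/L^2-q^2-k_2^2)=0$. Theorem~\ref{thm:rotational-first-integral} then gives a constant parabolic generator with $X=e^{t\mathcal B}X(s,0)$. Corollary~\ref{cor:timelike-system} yields biharmonicity for $\lambda^{-2}g$. Properness follows because $\tau_{\lambda^{-2}g}(\phi)=2u\xi\neq0$.

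\emph{Step 4: nonconstancy and openness.} $H'(0)=H'_0\neq0$, so $H$ is nonconstant. For the dilation,
\[
\frac{(\lambda^2)'}{\lambda^2}=\frac{u'}{u}-\frac{H'}{H}=-H'\Bigl(\frac1{k_1}+\frac1H\Bigr)=-H'\,\frac{H+k_1}{k_1H}.
\]
This is nonzero at $s=0$ since $H_0\neq-k_{1,0}$ and $H'_0\neq0$; this is where the sign in \eqref{eq:timelike-parabolic-open-data} differs from the spacelike case. Openness holds because every condition is a strict inequality on continuous functions of $(\theta_0,a_0,b_0,u_0)$.
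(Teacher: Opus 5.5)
Your proposal is correct and follows the paper's proof essentially step for step. The shared chain is: the analytic normal form from $k_{1,0}\neq0$, the quadratures, the fundamental theorem of hypersurfaces, $\mathfrak c=0$ to invoke Theorem~\ref{thm:rotational-first-integral}, and the identity $(\lambda^2)'/\lambda^2=-H'(1/k_1+1/H)$. Your explicit verification of Gauss and Codazzi from the definition of $k_1$ simply makes concrete what the paper asserts ``by construction.''

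There is one slip in your planned cross-check. For $g=\sigma\,ds^2+\eta\rho^2dt^2$ the Gaussian curvature is $K=-\sigma(q'+q^2)$, not $-\sigma\eta(q'+q^2)$; for instance, $dr^2-\cosh^2r\,dt^2$ has $K=-1$. Only the correct formula, combined with $K=c+\varepsilon\det A$, reproduces \eqref{eq:rot-gauss-analytical}. Yours would produce a spurious sign mismatch when $\eta=-1$. Your main argument is unaffected, because you verify \eqref{eq:rot-gauss-analytical} directly and that equation is the correct Gauss equation.
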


\begin{proof}
Because $k_{1,0}\neq0$, the scalar equation is equivalent near the initial
point to the analytic normal form
\eqref{eq:timelike-third-order-normal-form}.  Standard analytic ODE theory
therefore gives a unique local analytic solution $\theta$.  The functions
$\rho$ and $u$ are then nonzero on a sufficiently small interval, and the
quantities $q,k_1,k_2,H$ satisfy Gauss, Codazzi, and the timelike
conformal-biharmonic equations by construction.  The fundamental theorem of
hypersurfaces in a semi-Riemannian space form gives a local timelike
immersion, unique up to ambient isometry.  Its orbit invariant is
$\mathfrak c=0$, so Theorem~\ref{thm:rotational-first-integral} identifies
the symmetry as a parabolic one-parameter action with timelike orbit.

Since $H'_0\neq0$, the mean curvature is nonconstant.  Furthermore,
\begin{equation}
    \frac{(\lambda^2)'}{\lambda^2}
    =
    -H'
    \left(
        \frac1{k_1}+\frac1H
    \right).
    \label{eq:timelike-lambda-log-derivative}
\end{equation}
The last inequality in
\eqref{eq:timelike-parabolic-open-data} therefore implies
$(\lambda^2)'(0)\neq0$.  After shrinking the interval, $u/H>0$, so
$\lambda$ is positive and nonconstant.  Finally, $H_0\neq0$ and
$u_0\neq0$ imply
$\tau_{\lambda^{-2}g}(X)=2u\xi\neq0$, proving proper biharmonicity.
\end{proof}

\begin{example}[Concrete timelike parabolic data]
\label{ex:timelike-concrete-parabolic-data}
Set $L=1$ and choose
\begin{equation}
    \theta_0=0,
    \qquad
    a_0=1,
    \qquad
    b_0=0,
    \qquad
    \rho_0=1,
    \qquad
    u_0=1.
\end{equation}
Then
\begin{equation}
    q_0=1,
    \qquad
    k_{2,0}=0,
    \qquad
    k_{1,0}=1,
    \qquad
    H_0=\frac12,
    \qquad
    H'_0=1.
\end{equation}
Moreover,
\begin{equation}
    \lambda_0^2=2,
    \qquad
    (\lambda^2)'(0)=-6.
\end{equation}
Thus these data lie in the open set
\eqref{eq:timelike-parabolic-open-data} and define a rigorous local
timelike proper biharmonic conformal immersion with both nonconstant mean
curvature and nonconstant dilation.
\end{example}

The timelike family also admits a canonical ambient reconstruction.  In the
null coordinates of \eqref{eq:null-ambient-metric}, define
\begin{equation}
    \widetilde{\mathcal P}_t(U,V,Y,Z)
    =
    \left(
        U,\,
        V-tY-\frac12t^2U,\,
        Y+tU,\,
        Z
    \right).
    \label{eq:timelike-parabolic-action}
\end{equation}
This is the one-parameter subgroup generated by
$B_{\mathrm{par}}^{(-)}$ in
\eqref{eq:parabolic-normal-form}.  Let $\theta$ solve
\eqref{eq:timelike-parabolic-scalar}, define $\rho$ by
\eqref{eq:timelike-rho-quadrature}, and let $Z$ solve
\begin{equation}
    Z'
    =
    \frac{\cos\theta}{L}Z+\sin\theta,
    \qquad
    Z(0)=Z_0.
    \label{eq:timelike-Z-equation}
\end{equation}
Set
\begin{equation}
    V
    =
    \frac{L^2+Z^2}{2\rho}.
    \label{eq:timelike-canonical-V}
\end{equation}
Then
\begin{equation}
    \boxed{
    X(s,t)
    =
    \widetilde{\mathcal P}_t
    \bigl(\rho(s),V(s),0,Z(s)\bigr)
    =
    \left(
        \rho,\,
        V-\frac12t^2\rho,\,
        t\rho,\,
        Z
    \right).
    }
    \label{eq:timelike-canonical-immersion}
\end{equation}

\begin{proposition}[Explicit timelike reconstruction]
\label{prop:explicit-timelike-parabolic-reconstruction}
The map \eqref{eq:timelike-canonical-immersion} takes values in
$\AdS_3(L)$ and has induced metric
\begin{equation}
    g=ds^2-\rho(s)^2dt^2.
    \label{eq:timelike-canonical-metric}
\end{equation}
With a suitable spacelike unit normal, its principal curvatures are
\begin{equation}
    k_1
    =
    \theta'+\frac{\sin\theta}{L},
    \qquad
    k_2
    =
    \frac{\sin\theta}{L}.
    \label{eq:timelike-canonical-curvatures}
\end{equation}
Consequently, the data of
Theorem~\ref{thm:timelike-local-parabolic-family} reconstruct explicitly as
proper biharmonic conformal immersions with timelike parabolic symmetry.
\end{proposition}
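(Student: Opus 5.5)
The proof mirrors Proposition~\ref{prop:explicit-parabolic-reconstruction}, with the ambient action \eqref{eq:timelike-parabolic-action} in place of \eqref{eq:parabolic-action}. It has three steps.

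\textbf{Step 1: the image lies in $\AdS_3(L)$.} First I check that $\widetilde{\mathcal P}_t$ preserves $-2\,dU\,dV-dY^2+dZ^2$. The only terms that need checking come from $-2UV'-Y'^2$ with $V'=V-tY-\tfrac12t^2U$ and $Y'=Y+tU$. Expanding gives $-2UV+2tUY+t^2U^2-Y^2-2tUY-t^2U^2=-2UV-Y^2$. So it suffices to verify the constraint at $t=0$. There \eqref{eq:timelike-canonical-V} gives $-2\rho V+Z^2=-L^2$, so $X$ takes values in $\AdS_3(L)$.

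\textbf{Step 2: the induced metric.} Since $\widetilde{\mathcal P}_t$ is an isometry commuting with $\partial_s$, and $X_t=\widetilde{\mathcal B}X$ with $\widetilde{\mathcal B}$ constant, all metric coefficients are $t$-independent. It is therefore enough to compute at $t=0$.
\begin{itemize}
\item $X_t=(0,-Y,U,0)=(0,0,\rho,0)$, so $\langle X_t,X_t\rangle=-\rho^2$.
\item $X_s=(\rho',V',0,Z')$ has no $Y$-component, so $\langle X_s,X_t\rangle=0$.
\item Differentiating $V=(L^2+Z^2)/(2\rho)$ gives $\rho'V'=q(ZZ'-\tfrac12 q(L^2+Z^2))$ with $q=\rho'/\rho$. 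Hence $\langle X_s,X_s\rangle=-2\rho'V'+Z'^2=L^2q^2+(Z'-qZ)^2$.
\end{itemize}
With $q=\cos\theta/L$ from \eqref{eq:timelike-rho-quadrature} and \eqref{eq:timelike-Z-equation}, this becomes $\cos^2\theta+\sin^2\theta=1$, which proves \eqref{eq:timelike-canonical-metric}.

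\textbf{Step 3: the normal and principal curvatures.} This is the main computation. I would look for a unit spacelike normal $\xi_0(s)$ along $t=0$ with vanishing $Y$-component, so that it is automatically orthogonal to $X_t$. By analogy with \eqref{eq:canonical-parabolic-normal}, with hyperbolic functions replaced by trigonometric ones and the roles of $Y$ and $Z$ swapped, the ansatz is
\[
\xi_0=\Bigl(-\tfrac{\rho\sin\theta}{L},\ \alpha,\ 0,\ \cos\theta+\tfrac{Z\sin\theta}{L}\Bigr).
\]
The component $\alpha$ is fixed by $\langle\xi_0,X\rangle=0$. I then verify $\langle\xi_0,X_s\rangle=0$ and $\langle\xi_0,\xi_0\rangle=+1$ using the ODEs for $\rho$ and $Z$. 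If a sign or coefficient fails, I would instead solve the three linear conditions for $\xi_0$ directly.

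I then extend $\xi=\widetilde{\mathcal P}_t\xi_0$ and compute the second fundamental form at $t=0$:
\begin{itemize}
\item $\langle X_{st},\xi\rangle=0$, since $X_{st}=(0,0,\rho',0)$ and $\xi_0$ has no $Y$-component.
\item $X_{tt}=(0,-\rho,0,0)$ gives $\langle X_{tt},\xi\rangle=\rho\,\xi_0^U=-\rho^2\sin\theta/L$. Dividing by $\langle X_t,X_t\rangle=-\rho^2$ yields $k_2=\sin\theta/L$.
\item $\langle X_{ss},\xi\rangle$ should simplify to $\theta'+\sin\theta/L$ after substituting the ODEs. This algebra is the expected obstacle; differentiating $\langle X_s,\xi_0\rangle=0$ to get $\langle X_{ss},\xi_0\rangle=-\langle X_s,\xi_0'\rangle$ should shorten it.
\end{itemize}
Since the coordinate directions are orthogonal and the off-diagonal term vanishes, they are principal, which gives \eqref{eq:timelike-canonical-curvatures}. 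The signs of $A$ under the Gauss--Weingarten conventions are fixed by the choice of $\pm\xi$.

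\textbf{Conclusion.} The first and second fundamental forms now coincide with those of Theorem~\ref{thm:timelike-local-parabolic-family}. Its conformal-biharmonic equations and proper biharmonicity therefore transfer directly. The symmetry is timelike parabolic because it is generated by $B^{(-)}_{\mathrm{par}}$, consistent with Theorem~\ref{thm:rotational-first-integral}.
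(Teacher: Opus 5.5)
Your proposal is correct and follows the paper's proof step by step: invariance of the quadric under $\widetilde{\mathcal P}_t$; the metric at $t=0$ via $\langle X_s,X_s\rangle=L^2q^2+(Z'-qZ)^2=1$; an explicit normal along $t=0$ transported by $\widetilde{\mathcal P}_t$; and the three pairings $\langle X_{st},\xi\rangle$, $\langle X_{tt},\xi\rangle$, $\langle X_{ss},\xi\rangle$. The one slip is the last component of your normal ansatz, which must be $\cos\theta-Z\sin\theta/L$, so that $\langle\xi_0,X\rangle=0$ gives $\alpha=(L^2\sin\theta+2LZ\cos\theta-Z^2\sin\theta)/(2L\rho)$ as in the paper. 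With your sign one finds $\langle\xi_0,X_s\rangle=2Z\sin^2\theta/L$, which is generally nonzero, so your fallback of solving the linear conditions is genuinely needed; the $U$-component is unchanged, so your $k_2$ computation stands.
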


\begin{proof}
Equation \eqref{eq:timelike-canonical-V} gives
\[
-2\rho V+Z^2=-L^2,
\]
and \eqref{eq:timelike-parabolic-action} preserves
\eqref{eq:null-ambient-metric}; hence
$X(s,t)\in\AdS_3(L)$.  At $t=0$,
\[
X_t=(0,0,\rho,0),
\]
so $\langle X_t,X_t\rangle=-\rho^2$ and
$\langle X_s,X_t\rangle=0$.  Writing $q=\rho'/\rho$, differentiation of
\eqref{eq:timelike-canonical-V} gives
\begin{equation}
    \langle X_s,X_s\rangle
    =
    L^2q^2+(Z'-qZ)^2.
\end{equation}
Equations \eqref{eq:timelike-parabolic-angle} and
\eqref{eq:timelike-Z-equation} therefore yield
\[
\langle X_s,X_s\rangle
=
\cos^2\theta+\sin^2\theta
=
1,
\]
which proves \eqref{eq:timelike-canonical-metric}.

Along $t=0$, define
\begin{equation}
\begin{split}
    \xi_0
    =
    \Bigg(
    &-\frac{\rho\sin\theta}{L},\\
    &\frac{
        L^2\sin\theta
        +2LZ\cos\theta
        -Z^2\sin\theta
    }{2L\rho},\\
    &0,\,
    \cos\theta-\frac{Z\sin\theta}{L}
    \Bigg).
\end{split}
\label{eq:timelike-canonical-normal}
\end{equation}
A direct calculation gives
\begin{equation}
    \langle\xi_0,\xi_0\rangle=1,
    \qquad
    \langle\xi_0,X\rangle
    =
    \langle\xi_0,X_s\rangle
    =
    \langle\xi_0,X_t\rangle
    =
    0.
\end{equation}
Extend the normal by
\[
\xi(s,t)=\widetilde{\mathcal P}_t\xi_0(s).
\]
Using \eqref{eq:timelike-Z-equation} and
\eqref{eq:timelike-parabolic-angle}, one finds
\begin{equation}
    \langle X_{st},\xi\rangle=0,
    \qquad
    \langle X_{tt},\xi\rangle
    =
    -\rho^2\frac{\sin\theta}{L},
    \qquad
    \langle X_{ss},\xi\rangle
    =
    \theta'+\frac{\sin\theta}{L}.
\end{equation}
Since the normal sign is $\varepsilon=+1$ and
$g_{tt}=-\rho^2$, these identities give precisely
\eqref{eq:timelike-canonical-curvatures}.  The final assertion follows from
Theorem~\ref{thm:timelike-local-parabolic-family}.
\end{proof}

As in the spacelike case, the auxiliary profile coordinate is recovered by
quadrature:
\begin{equation}
    Z(s)
    =
    \rho(s)
    \left[
        \frac{Z_0}{\rho_0}
        +
        \int_0^s
        \frac{\sin\theta(\tau)}{\rho(\tau)}
        \,d\tau
    \right].
    \label{eq:timelike-Z-quadrature}
\end{equation}
Thus, after quotienting by positive orbit-coordinate rescaling and constant
source homothety, the generic pointed real-principal timelike parabolic germ
has three continuous profile parameters $(\theta_0,a_0,b_0)$ on the free
generic stratum.  As in the spacelike branch, residual discrete
identifications and exceptional stabilizers are not being quotiented into a
global smooth moduli space.

\section{What the results establish and what remains open}
\label{sec:conclusion}

The paper began with a single geometric question: can a fixed surface in
$\AdS_3$ be made proper biharmonic by changing only the conformal metric on
its domain?  The answer is neither a general rigidity theorem nor an
unrestricted existence theorem.  Instead, the analysis reveals a boundary
between two regimes.

On the spacelike nonminimal CMC branch, the conformal freedom is illusory.
The tangential biharmonic equation forces the weighted mean curvature to be
constant along the kernel directions of the shape operator, and the
Gauss--Codazzi equations then rule out a variable dilation.  The surviving
surface is locally totally umbilical with
$A=\pm L^{-1}\operatorname{id}$ and $K=-2L^{-2}$.  In other words, conformal
reparametrization does not create a new CMC family; it only rescales the
source metric of the known isometric branch.

Outside CMC geometry, the situation changes decisively.  The analytic
cohomogeneity-one system admits an open set of local data for which both the
mean curvature and the dilation vary.  This proves that the CMC theorem is a
sharp branch rigidity statement rather than evidence for a general
nonexistence principle.  After the natural gauges are removed, the generic
pointed cohomogeneity-one profile carries four continuous parameters, while
the parabolic orbit constraint lowers the corresponding profile data to
three.

The moving-frame invariant completes the passage from intrinsic data to
extrinsic geometry.  It produces a constant rank-two generator in
$\mathfrak{so}(2,2)$ and classifies the one-parameter orbit as elliptic,
hyperbolic, or index-three parabolic.  On the parabolic branch, the surface
system reduces to one third-order scalar equation, and the null-coordinate
formulas reconstruct the immersion and its conformal dilation by
quadratures.  Thus the main progression of the paper is complete at the
local level:
\[
\text{rigid CMC branch}
\quad\longrightarrow\quad
\text{open non-CMC profiles}
\quad\longrightarrow\quad
\text{explicit ambient surfaces}.
\]

The remaining problems are global.  The most immediate question is whether
one of the nonconstant-dilation parabolic germs extends to a complete
surface while keeping $H$ away from zero.  For the elliptic and hyperbolic
classes, the existence of periodic profile curves would produce natural
global examples.  A related issue is whether the induced metric and the
conformally rescaled source metric can be complete simultaneously.  On the
timelike side, the present paper treats one real-principal causal branch;
the complex-principal, Jordan/null-principal, and opposite causal
allocations remain to be analyzed.

A useful next theorem would therefore be a continuation criterion for the
scalar third-order flows, separating extension from blow-up in geometric
quantities such as $H$, $\lambda$, and the principal curvatures.  Such a
criterion would turn the local construction developed here into a systematic
route toward complete or periodic biharmonic conformal surfaces.

\section*{Data and code availability}
No external data were used.  The source package accompanying this manuscript
contains the self-contained scripts
\texttt{verify\_bci\_analytics.py} and
\texttt{generate\_parabolic\_generating\_curves.py}.  From the source
directory, the commands
\begin{verbatim}
python verify_bci_analytics.py \
  --manuscript bci_ads3_human_readable_revision.tex
python generate_parabolic_generating_curves.py
\end{verbatim}
write the machine-readable JSON audit and regenerate the PDF figure listed
above.  The JSON output records the software environment and SHA-256 hashes
of the checked files.

\section*{Generative AI disclosure}
OpenAI ChatGPT models were used as auxiliary tools for exploratory algebra,
code drafting, literature organization, consistency checks, and language
editing.  All theorem statements, proofs, computations, citations, and
interpretive claims were checked by the author, who takes full responsibility
for the manuscript.

\appendix
\section{Reproducibility audit}
\label{app:audit}

The analytic results do not depend on numerical computation.  The supplied
script separates exact symbolic identities from floating-point diagnostics.
The reported run used Python~3.13.5, SymPy~1.14.0, NumPy~2.3.5, and
SciPy~1.17.0.  The ordinary differential equations were integrated with
\texttt{solve\_ivp(method="DOP853")}, relative tolerance
$2\times10^{-13}$ and absolute tolerance $2\times10^{-15}$.

The following checks are exact symbolic identities in the script:
\begin{enumerate}
\item the compact spacelike parabolic equation
\eqref{eq:parabolic-third-order-compact} solves for $\theta'''$ as exactly
\eqref{eq:parabolic-third-order-expanded};
\item the orbit matrix is skew-adjoint and satisfies
$M^3=\mathfrak c M$;
\item both displayed index-three parabolic actions preserve
$-2\,dU\,dV-dY^2+dZ^2$;
\item the matrix \eqref{eq:square-zero-nilpotent} is a rank-two,
skew-adjoint, square-zero counterexample to any unrestricted claim that the
index-three blocks exhaust all nilpotent rank-two elements of
$\mathfrak{so}(2,2)$.
\end{enumerate}

The numerical diagnostics are deliberately reported separately.  For the
six-variable example of Section~\ref{sec:cohomogeneity-one-existence},
quintic interpolating splines are differentiated and compared with the
independent Gauss, Codazzi, and normal ODE expressions away from eight points
at each endpoint.  For the explicit
spacelike parabolic reconstruction, the anti-de Sitter constraint, metric,
normal, and second fundamental form are evaluated from the reconstructed
ambient coordinates.  The larger second-form error reflects numerical
second differentiation and is not used as theorem-level evidence.

\begin{table}[htbp]
\centering
\begin{tabular}{lc}
\toprule
Numerical diagnostic & Maximum absolute error \\
\midrule
Cohomogeneity-one Gauss spline residual & $2.36\times10^{-11}$\\
Cohomogeneity-one Codazzi spline residual & $2.25\times10^{-11}$\\
Cohomogeneity-one normal-ODE spline residual & $2.06\times10^{-11}$\\
Canonical parabolic anti-de Sitter constraint & $4.45\times10^{-16}$\\
Canonical parabolic metric & $7.17\times10^{-11}$\\
Canonical parabolic normal relations & $8.96\times10^{-13}$\\
Canonical parabolic second fundamental form & $3.40\times10^{-8}$\\
\bottomrule
\end{tabular}
\caption{Independent floating-point diagnostics for the local examples with
$L=1$.  Exact algebraic identities are listed separately in the text.}
\label{tab:numerical-consistency}
\end{table}

For Example~\ref{ex:concrete-parabolic-data}, the script recovers
\[
H(0)=-\frac12,
\qquad
H'(0)=-1,
\qquad
\lambda^2(0)=2,
\qquad
(\lambda^2)'(0)=-6.
\]
A perturbation test with NumPy seed $20260705$ independently perturbs
$\theta(0)$, $\theta'(0)-1$, and $\theta''(0)$ uniformly in
$[-0.02,0.02]$.  All $50$ trials remain on the open branch
$k_1H\neq0$ over $[-0.025,0.025]$; the smallest observed
$\min(|k_1|,|H|)$ margin is $0.450$.  This experiment illustrates, but does
not prove, the openness established analytically in
Theorem~\ref{thm:analytical-local-parabolic-family}.  Full-precision values,
file hashes, and environment metadata are stored in
\texttt{audit\_results.json}.

\end{document}